\documentclass[english,12pt,oneside]{amsproc}
\usepackage[english]{babel}
\usepackage{a4wide}
\usepackage{amsthm}
\usepackage{graphics}
\usepackage{amsfonts, amssymb, amscd, amsmath}
\usepackage{latexsym}
\usepackage[matrix,arrow,curve]{xy}
\usepackage{mathabx}
\usepackage{color}
\usepackage{pbox}
\usepackage{tikz}
\usetikzlibrary{matrix,decorations.pathreplacing,positioning}
\usepackage{hyperref}

\DeclareMathOperator{\Cone}{Cone} \DeclareMathOperator{\pt}{pt}

 \DeclareMathOperator{\lk}{lk} \DeclareMathOperator{\st}{st}

\DeclareMathOperator{\Hilb}{Hilb}
\DeclareMathOperator{\conv}{conv} \DeclareMathOperator{\Ob}{Ob}
\DeclareMathOperator*{\colim}{colim} 
\DeclareMathOperator{\cl}{cl} \DeclareMathOperator{\scl}{sc}
\DeclareMathOperator{\op}{op} \DeclareMathOperator{\inc}{inc}
\DeclareMathOperator{\Quil}{Quil} \DeclareMathOperator{\Nerve}{Nerve}
\DeclareMathOperator{\pr}{pr} \DeclareMathOperator{\SubObj}{SubObj}
 \DeclareMathOperator{\neur}{neur}
\DeclareMathOperator{\cone}{Cone} \DeclareMathOperator{\core}{core}

\newcommand{\ko}{\Bbbk}
\newcommand{\Fo}{\mathbb{F}}

\newcommand{\Ro}{\mathbb{R}}

\newcommand{\eqd}{\stackrel{\text{\tiny def}}{=}}

\newcommand{\ca}[1]{\mathcal{#1}}

\newcommand{\dd}{\partial}
\newcommand{\Ca}{\mathcal{C}}
\newcommand{\I}{\mathcal{I}}
\newcommand{\F}{\mathcal{F}}

\newcommand{\Rn}{R_{\neur}}
\newcommand{\In}{I_{\neur}}

\newcommand{\ST}[1]{\mbox{\upshape\small #1}}
\newcommand{\cat}{\ST{CAT}}

\newcounter{stmcounter}[section]
\newcounter{thcounter}

\numberwithin{equation}{section}




\theoremstyle{plain}
\newtheorem{cor}[stmcounter]{Corollary}

\newtheorem{thm}[thcounter]{Theorem}

\newtheorem{prop}[stmcounter]{Proposition}
\newtheorem{lem}[stmcounter]{Lemma}

\theoremstyle{definition}
\newtheorem{defin}[stmcounter]{Definition}

\theoremstyle{remark}
\newtheorem{ex}[stmcounter]{Example}
\newtheorem{rem}[stmcounter]{Remark}
\newtheorem{con}[stmcounter]{Construction}

\begin{document}

\title{Topology of nerves and formal concepts}

\author{Anton Ayzenberg}
\address{Laboratory of applied geometry and topology, Faculty of computer science, Higher School of Economics}
\email{ayzenberga@gmail.com}

\date{\today}

\subjclass[2010]{Primary 55P10, 52C45, 18B35, 92C20, 68R10; Secondary 52B70, 05C20, 03G10, 05E45, 06B30, 55U10, 92C55, 90B85, 68T30, 97M60}

\keywords{Quillen--McCord theorem, Nerve theorem, applied topology, nerve complexes, formal concept analysis, complete lattice, Hasse diagram, neural code, place cells}

\begin{abstract}
The general goal of this paper is to gather and review several methods from homotopy and combinatorial topology and formal concepts analysis (FCA) and analyze their connections. FCA appears naturally in the problem of combinatorial simplification of simplicial complexes and allows to see a certain duality on a class of simplicial complexes. This duality generalizes Poincare duality on cell subdivisions of manifolds. On the other hand, with the notion of a topological formal context, we review the classical proofs of two basic theorems of homotopy topology: Alexandrov Nerve theorem and Quillen--McCord theorem, which are both important in the applications. A brief overview of the applications of the Nerve theorem in brain studies is given. The focus is made on the task of the external stimuli space reconstruction from the activity of place cells. We propose to use the combination of FCA and topology in the analysis of neural codes. The lattice of formal concepts of a neural code is homotopy equivalent to the nerve complex, but, moreover, it allows to analyse certain implication relations between collections of neural cells.

%
\end{abstract}

\maketitle

\section{Introduction}\label{secIntro}
\subsection{Brief overview}
The general goal of this paper is to gather and review several basic methods from homotopy topology and formal concepts analysis (FCA). It appears that methods which are recently developing in topological data analysis (TDA) can be treated in terms of the theory of formal concepts: this allows to apply a well-developed algorithmic machinery of FCA in the area of computational topology. It may also be the case that the topological insight can open new directions in FCA.

Both homotopy topology (e.g. \cite{CurIts,CurItsNeuralRing,Dabag,DabagNoCov,Manin}) and FCA (e.g. \cite{EFP,EAGN}) had found applications in brain studies. These approaches to the analysis of neural codes are compatible in the sense which is described in this paper. We propose the notion of a topological formal context, which allows to quantify relations between a topological space of stimuli and a topological space of neurons. The notion of a topological formal context is applied to revisit the classical proofs of two important theorems in homotopy topology: the Nerve theorem of P.\,S.\,Alexandrov and Quillen--McCord theorem. Both theorems have similar proofs and both can be used in applications, in particular in the analysis of neural codes.

\subsection{Motivation and details}
The paper was originally motivated by the following problem. In computational topology, there is often a necessity to estimate a shape of some topological space numerically. For this purpose, the topological space is covered by a finite collection of subsets, then the simplicial homology modules of the nerve of this covering are computed. Other simplicial complexes can be used instead of the nerve, for example the Vietoris-Rips complex. However, the nerves of covers and Vietoris--Rips complexes may have a large dimension compared to the dimension of the given space (see Example~\ref{exCoverSimplify}). The dimensionality makes the direct computation of simplicial homology time consuming. Therefore, some preliminary methods are applied to simplify the simplicial complex.

One of the solutions of the dimension problem is to use alpha complexes~\cite{AlphaEdel,AlphaPap} instead of nerves and Vietoris--Rips complexes. The alpha complex is a certain combination of the nerve of the covering with the Delaunay triangulation. In particular, in order to construct the Delaunay triangulation, a metric should be specified on a given data. In this paper we develop the approach based on a combinatorial simplification of a simplicial complex, which is more in the spirit of the work~\cite{BPP}. This approach does not require a metric: it can be applied to any combinatorial simplicial complex.

The idea is the following: we look at a simplicial complex $K$ as a partially ordered set, and remove all its simplices which do not affect the homotopy type of the poset. We end up with a subposet $\widetilde{K}$ of $K$ (see Construction~\ref{conMainConstruction}) which has the same homotopy type as~$K$. This idea is not new: in the homotopy theory of finite topological spaces, it is realized under the name of the core of a space~\cite{BarmakBook,May,Osaki,Stong}, also see Definition~\ref{defUpDownCore}.

In this paper we want to put accent on the fact, that the construction of $\widetilde{K}$ out of $K$ is a classical procedure studied in the formal concept analysis (FCA) and data mining. In the language of FCA (see Section~\ref{secFCA}), the poset $\widetilde{K}$ of $K$ is exactly the lattice of formal concepts of the formal context $(V,W,\ca{I})$, where $V$ is the vertex set, $W$ is the set of maximal simplices of $K$, and $\ca{I}$ is the relation of inclusion. There are several programs supporting calculations with formal contexts, and there exist algorithms for the generation of a formal concept lattice of a given context. We suppose that these algorithms may find application in topological data analysis.

The brain study is a natural choice of research area, where the combination of topology with FCA can be used. The applications of the Nerve theorem in the analysis of place cells' activity in the hyppocampus are well known~\cite{CurIts,Dabag,Manin}. Formal concepts analysis is applied to find hierarchical structures in fMRI data~\cite{EFP,EAGN}. We suppose that using topological FCA in the analysis of place cells' firing patterns has certain advantages compared to a straightforward use of the Nerve theorem. The concept lattice allows to see implications of the form $U_{i_1}\cap\cdots\cap U_{i_s}\to U_j$ (i.e. to answer the question whether $j$-th neuron fires whenever all the neurons $i_1,\ldots,i_s$ fire), which the nerve of the cover does not allow. On the other hand, the concept lattice still allows to reconstruct the homotopy type of the stimuli space according to Quillen--McCord theorem.

The paper has the following structure. The basic definitions related to topology and combinatorics of simplicial complexes and partially ordered sets are gathered in Section~\ref{secPrelim}. In Section~\ref{secSimplification}, we define two combinatorial simplification methods for simplicial complexes. The first one, the weeding $\widetilde{K}$ of $K$, is based on taking all possible intersections of maximal simplices in a simplicial complex. The second is based on consecutive removal of the nodes of the Hasse diagram if they have either in-degree $1$ or out-degree $1$. We call this operation Stong reduction. It is shown that the procedures are related to each other: the weeding can be obtained by Stong reduction.

In Section~\ref{secNervesDuality}, we review the construction of nerve complexes of convex polytopes which was introduced in~\cite{AB} for the purposes of toric topology. The notion of a nerve-complex is generalized to the class of sufficiently well-behaved cell complexes. There is a certain generalization of the Poincare duality: if $K$ is a simplicial complex and $L$ is the nerve of the cover of $K$ by its maximal simplices, then the weeding poset of $L$ is isomorphic to the weeding poset of $K$ with the order reversed (see Theorem~\ref{thmDualityOfNerves}). The proof of this statement is based on certain Galois correspondence between boolean lattices. This makes a bridge from combinatorial topology to the theory of formal concepts: Galois correspondences of this type constitute the theoretical basis of FCA. Some basic notions of FCA are gathered in Section~\ref{secFCA}. Any simplicial complex $K$ gives rise to a formal context, in which objects are vertices of $K$, attributes are maximal simplices of $K$, and the context is given by inclusion. The lattice of formal concepts of this context is isomorphic to the weeding $\widetilde{K}$. On the contrary, any formal context $\Ca$ gives rise to two simplicial complexes, which are both homotopy equivalent to the lattice of formal contexts, see Proposition~\ref{propContextComplexesHomot}. Further, the formal context of a covering is defined; it is shown that the lattice of formal concepts of this context contains more combinatorial information than the nerve of the covering.

In Section~\ref{secMouse}, we review the existing mathematical constructions related to the analysis of place cell activity in the hyppocampus of an animal, in particular the construction based on the Nerve theorem. It seems that these methods can benefit from using not only topology, but topology combined with FCA. The lattice of formal concepts of a covering contains not only the homotopical information about the stimuli space, but also allows to implement data mining techniques for analysing neural codes.

Section~\ref{secTopologicalFCA} is devoted to enriched formal contexts, the most important being topological and ordered formal contexts. The classical proofs of the Nerve theorem and Quillen--McCord theorem are formulated in this section in terms of topological formal contexts. Formally, these proofs do not rely on any theory developed in this paper, so the interested reader can jump to subsection~\ref{subsecProofs} with no harm.

This work does not contain essential new results in either topology, formal concept analysis or brain study. However we consider it important to bring the methods used in these areas together and give a self-contained review that underlines the relations between these areas. We try to keep the exposition simple to make it accessible for a broad range of readers, sometimes by sacrificing the level of generality and abstractness. In particular, we speak very little about simplicial sets (although the categorical approach is common in homotopy theory) or finite Alexandrov topologies (although the homotopy theory of finite spaces gives a more conceptual way of thinking about lattices). These notions are standard in this research area, however we prefer to keep things on geometrical and discrete-mathematical ground.

\section{Preliminaries}\label{secPrelim}

\subsection{Simplicial complexes}
\begin{defin}
Let $V=[m]=\{1,2,\ldots,m\}$ be a finite set. A collection $K$ of subsets of $V$ is called a \emph{simplicial complex} if the following two conditions hold (1) $\varnothing\in K$; (2) if $I\in K$ and $J\subset I$ then $J\in K$. The third condition is also assumed to hold: for every $i\in V$, the singleton $\{i\}$ lies in $K$. The elements of $V$ are called vertices, and the elements of $K$ are called simplices of $K$. The number $\dim I\eqd |I|-1$ is called the dimension\footnote{Formally we have $\dim\varnothing=-1$ which may seem unnatural.} of a simplex $I\in K$.

For a simplicial complex $K$ on the set $[m]$ consider the topological space
\[
|K|=\bigcup_{I\in K, I\neq\varnothing}\triangle_I\subset \Ro^m,\quad \triangle_I=\conv\{e_i\mid i\in I\},
\]
where $e_1,\ldots,e_m$ is the standard basis of $\Ro^m$. The topological space $|K|$ is called the geometrical realization of $K$.
\end{defin}

\begin{con}
The basic constructions on simplicial complexes are as follows. If $K_1$, $K_2$ are simplicial complexes on the sets $[m_1]$ and $[m_2]$ respectively, then their join is defined by
\[
K_1\ast K_2=\{I_1\sqcup I_2\in [m_1]\sqcup[m_2]\mid I_1\in K_1,I_2\in K_2\}.
\]
This means that $K_1\ast K_2$ consists of all possible concatenations of simplices from $K_1$ and $K_2$. This construction is consistent with the topological join: $|K_1\ast K_2|\cong |K_1|\ast|K_2|$. The join $K\ast\pt$ of $K$ with the one-point simplicial complex is called the cone over $K$ and denoted $\cone K$.

The link of a simplex $I\in K$ is the simplicial complex
\[
\lk_KI=\{J\subset[m]\mid I\cap J=\varnothing, I\sqcup J\in K\}.
\]
The star of $I$ is the complex
\[
\st_KI=\{J\subset[m]\mid I\cup J\in K\}.
\]
The vertex sets in the definitions of link and star are chosen in a way that these complexes do not have ghost vertices. There holds $\st_KI=\lk_KI\ast \Delta_I$, where $\ast$ is the join of simplicial complexes, and $\Delta_I$ is the full simplex on the vertex set $I$. In particular, if $I$ is nonempty, the space $|\Delta_I|$ is contractible, hence $|\st_KI|=|\lk_KI|\ast|\Delta_I|$ is contractible as well.

If $i$ is a vertex of $K$, let $K\setminus i$ denotes the subcomplex formed by all simplices of $K$ which do not contain $i$. Then $K$ is obtained from $K\setminus i$ by attaching the cone $\st_Ki=\cone\lk_Ki$ to $\lk_Ki$:
\begin{equation}\label{eqStarSubcomplDecompose}
K=(K\setminus i)\cup_{\lk_Ki}\st_Ki.
\end{equation}
\end{con}

The study of general topological spaces can be reduced to the study of simplicial complexes in several ways. The classical way, which is relevant to our study, is based on the nerves of coverings.

\begin{con}
Let a topological space $X$ be covered by a collection of subsets $\ca{U}=\{U_1,\ldots,U_m\}$, that is $X=\bigcup_{i\in[m]}U_i$. The simplicial complex
\[
K_{\ca{U}}\eqd\{\{i_1,\ldots,i_k\}\subseteq[m]\mid U_{i_1}\cap\cdots\cap U_{i_k}\neq\varnothing\}
\]
is called \emph{the nerve of the covering} $\ca{U}$.

If the intersection $\bigcap_{i\in I}U_i$ is contractible for any $I\in K_{\ca{U}}$, $I\neq\varnothing$ (i.e. whenever the intersection is non-empty), the covering is called \emph{contractible}.
\end{con}

\begin{thm}[the Nerve theorem of P.\,S.\,Alexandrov~\cite{AlexNerve}]\label{thmNerveThm}
Let $\ca{U}$ be the covering of $X$ and $K_{\ca{U}}$ be its nerve. Assume that either all sets $U_i$ are open subsets of a paracompact space, or $X$ is a cell complex in which all possible intersections $U_{i_1}\cap\cdots\cap U_{i_k}$ are cell subcomplexes. If the covering $\ca{U}$ is contractible, then $|K_{\ca{U}}|$ is homotopy equivalent to $X$.
\end{thm}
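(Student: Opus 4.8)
The plan is to interpose between $X$ and $|K_{\ca{U}}|$ a single auxiliary space --- the \emph{Mayer--Vietoris blow-up} of the cover --- and to show that it projects onto both by homotopy equivalences. The contractibility hypothesis will control one projection; the covering hypothesis (paracompact--open, or cellular) will control the other. Note that a spectral-sequence or \v{C}ech argument would only recover homology, so to get a homotopy equivalence the blow-up is essential.

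First I would build the blow-up. Let $\overline{K}_{\ca{U}}=K_{\ca{U}}\setminus\{\varnothing\}$ be the face poset ordered by reverse inclusion, and let $\F$ be the diagram on it sending $I$ to $U_I\eqd\bigcap_{i\in I}U_i$ and a relation $I\supseteq J$ to the inclusion $U_I\hookrightarrow U_J$. Set $\mathcal{B}=\bigl(\coprod_{I\in\overline{K}_{\ca{U}}} U_I\times\Delta_I\bigr)/{\sim}$, where for $J\subseteq I$ the subset $U_I\times\Delta_J\subseteq U_I\times\Delta_I$ is glued onto $U_J\times\Delta_J$ via $U_I\hookrightarrow U_J$, and $\Delta_I$ is the geometric simplex on the vertex set $I$. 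This $\mathcal{B}$ is the homotopy colimit $\hocolim_{\overline{K}_{\ca{U}}}\F$; in the language of this paper it is the geometric object attached to the topological formal context of the covering. It carries two canonical maps: $p\colon\mathcal{B}\to X$, sending $(x,t)\in U_I\times\Delta_I$ to $x$, and $q\colon\mathcal{B}\to|K_{\ca{U}}|$, sending $(x,t)$ to $t$.

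Second I would show $q$ is a homotopy equivalence. Since $\ca{U}$ is contractible, every value $U_I$ of $\F$ is contractible, so the unique natural transformation $\F\Rightarrow\underline{\pt}$ is objectwise a homotopy equivalence and hence induces a homotopy equivalence $\mathcal{B}=\hocolim\F\to\hocolim\underline{\pt}=|\Nerve(\overline{K}_{\ca{U}})|$. The order complex of the face poset of $K_{\ca{U}}$ is its barycentric subdivision, so $|\Nerve(\overline{K}_{\ca{U}})|\cong|K_{\ca{U}}|$, and one checks that this identification carries the blow-up projection to $q$. Concretely, over the open star of each vertex $q$ restricts to a projection with contractible fibre, which is what makes the argument run.

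Third --- and this is the step I expect to be the main obstacle --- I would show $p\colon\mathcal{B}\to X$ is a homotopy equivalence, and here the two alternative hypotheses enter. In the paracompact--open case, choose a partition of unity $\{\varphi_i\}$ subordinate to $\{U_i\}$ and define $s\colon X\to\mathcal{B}$ by $s(x)=(x,\sum_{i}\varphi_i(x)e_i)$, the sum taken over $i$ with $x\in U_i$; then $p\circ s=\id_X$, and a straight-line homotopy in the simplex coordinates gives $s\circ p\simeq\id_{\mathcal{B}}$. In the cellular case, where each $U_I$ is a subcomplex, $p$ is filtered by the skeleta of $X$ and one argues cell by cell --- equivalently, by iterated mapping-cylinder replacement of the inclusions $U_I\hookrightarrow U_J$, which deformation retracts $\mathcal{B}$ back onto $X$ --- that $p$ is a weak equivalence, hence a homotopy equivalence of CW complexes. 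The delicate points throughout are the local product structure of $q$ used in the second step and, in the third step, the naturality of the section $s$ (resp. of the retraction); the remainder is formal bookkeeping with the blow-up construction.
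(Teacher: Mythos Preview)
Your proposal is correct and its architecture is identical to the paper's: the intermediate space $\mathcal{B}$ you build is precisely the space $\I_{\Nerve}\subset |K_{\ca{U}}|\times X$ that the paper introduces as the topological formal context of the covering (as you yourself note), and both projections $p$, $q$ are the same maps $\pr_X$, $\pr_{K_{\ca{U}}}$. The difference lies in how the two projections are shown to be homotopy equivalences. The paper invokes a single external tool---Smale's theorem that a map with contractible fibers (under mild hypotheses) is a homotopy equivalence---and applies it symmetrically: the fibers of $\pr_{K_{\ca{U}}}$ are the intersections $U_I$, contractible by hypothesis, while the fibers of $\pr_X$ over $y\in X$ are full simplices on $\{i\mid y\in U_i\}$, hence also contractible. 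You instead treat the two projections asymmetrically, using homotopy invariance of the homotopy colimit for $q$ and an explicit partition-of-unity section (or cellular retraction) for $p$. Your route is more self-contained and closer to Hatcher's exposition; the paper's route is shorter and more uniform, at the cost of importing Smale's result.
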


The proof is given in subsection~\ref{subsecProofs}.

\subsection{Posets}

\begin{con}
Let $(S,\leqslant)$ be a partially ordered set (a poset). For two elements $s_1,s_2\in S$ we write $s_1<s_2$ if $s_1\leqslant s_2$ and $s_1\neq s_2$. Sometimes we write $s_1\leqslant_Ss_2$ to underline the poset in which the order relation is taken. All posets are assumed finite in this paper. If $S$ is a poset, let $S^{\op}$ denote the poset obtained from $S$ by reversing the order.

For an element $s\in S$ consider the posets
\[
S_{\leqslant s}=\{t\in S\mid t\leqslant s\},\qquad S_{<s}=\{t\in S\mid t<s\}.
\]
The posets $S_{\geqslant s}$ and $S_{>s}$ are defined similarly. The partial order on these subsets is induced from $S$.

With each poset $S$, one can associate a small category $\cat(S)$, whose objects are the elements of $S$ and there is exactly one morphism from $s_1$ to $s_2$ whenever $s_1\leqslant s_2$ and no morphisms otherwise. It is natural to denote this morphism simply by $(s_1\leqslant s_2)$. Therefore, the composition of morphisms is naturally defined by $(s_2\leqslant s_3)\circ(s_1\leqslant s_2)=(s_1\leqslant s_3)$, and the identity morphism of $s\in S=\Ob\cat(S)$ is $(s\leqslant s)$.
\end{con}

\begin{ex}
One can look at the simplicial complex as a particular example of a poset, since simplices are ordered by inclusion. However, it is natural to exclude the empty set from the consideration. So, for a simplicial complex $K$, we consider the poset $S_K=(K\setminus\varnothing,\subseteq)$.
\end{ex}

\begin{con}[Hasse diagram]
It is convenient to represent posets by their Hasse diagrams. The Hasse diagram $\Gamma_S$ of a poset $S$ is the directed graph on the vertex set $S$, which has a directed edge from $s_1$ to $s_2$ if $s_1<s_2$ and there is no element $s\in S$ with the property $s_1<s<s_2$. The (finite) poset can be restored from it Hasse diagram: we have $s_1\leqslant s_2$ if there is a directed pass from $s_1$ to $s_2$ in $\Gamma_S$. Similarly, any directed graph without directed cycles determines a poset.
\end{con}

\subsection{Geometrical realizations}

\begin{defin}\label{defPosetRealiz}
For a finite poset $S$, consider the simplicial complex $K(S)$ with the vertex set $S$ and simplices of the form $\{s_1,\ldots,s_k\}$ where $s_1<s_2<\cdots<s_k$ in $S$. This means that simplices are given by chains in $S$. The topological space $|K(S)|$ is called the geometrical realization of a poset $S$ and denoted $|S|$.
\end{defin}

\begin{rem}
The definitions of geometrical realizations of a poset and that of a simplicial complex agree. The complex $K(S_K)$ is the barycentric subdivision of $K$, therefore $|K(S_K)|\cong |K|$ for any simplicial complex $K$
\end{rem}

The following convention is quite common and natural: it is said that a simplicial complex $K$ or a poset $S$ ``has topological property $\ca{P}$'' if its geometrical realization $|K|$, or $|S|$ respectively, ``has topological property $\ca{P}$''. For example, $K$ is called contractible if $|K|$ is contractible.

\begin{rem}\label{remMinMaxCone}
Definition~\ref{defPosetRealiz} implies
\begin{equation}\label{eqCones}
|S_{\leqslant s}|=|s\ast S_{<s}|=\Cone|S_{<s}|,\mbox{ and similarly } |S_{\geqslant s}|=\Cone|S_{>s}|.
\end{equation}
In particular, this implies the following. If a poset $S$ has the largest element (i.e. $\hat{1}\in S$ such that $s\leqslant \hat{1}$ for any $s\in S$), then $S$ is contractible, $|S|\simeq\pt$. Indeed, if $\hat{1}$ is the largest element, then $|S|=|S_{\leqslant \hat{1}}|=\Cone|S_{<\hat{1}}|\simeq\pt$, according to \eqref{eqCones}. Similarly, the existence of the least element implies contractibility.
\end{rem}

\begin{rem}
Let $S$ be a poset, $K(S)$ be the corresponding simplicial complex, and $s\in S$. The following formula appears to be quite useful in practice:
\begin{equation}\label{eqLinkInPoset}
\lk_{K(S)}\{s\}=K(S_{<s})\ast K(S_{>s}).
\end{equation}
Indeed, a simplex of $\lk_{K(S)}\{s\}$ has the form $I=\{s_1,\ldots,s_r\}$ such that $I\sqcup \{s\}$ is a chain in~$S$. Therefore, we have (probably, after reordering) $s_1<\cdots<s_l<s<s_{l+1}<\cdots<s_r$. Therefore, $I$ is the concatenation of the simplex $\{s_1,\ldots,s_l\}$ of $K(S_{<s})$ and the simplex $\{s_{l+1},\ldots,s_r\}$ of $K(S_{>s})$.
\end{rem}

\subsection{Galois connection}

\begin{defin}\label{defMorphismPosets}
A function $F\colon S\to T$ between two posets is called a \emph{morphism} (or \emph{monotonic}), if it preserves the order: the condition $s_1\leqslant_S s_2$ implies $F(s_1)\leqslant_T F(s_2)$. A morphism is a functor from $\cat(S)$ to $\cat(T)$.
\end{defin}

If two morphisms $F\colon S\to T$ and $G\colon T\to S$ are given, they are simply written as $F\colon S\rightleftarrows T\colon G$.

\begin{defin}\label{defGaloisCon}
A pair of morphisms $F\colon S\rightleftarrows T\colon G$ is called a Galois connection, if\footnote{There are variations in the definition, slightly different from the one given here, e.g. by interchanging $F,G$ or changing the order relation to the opposite. Note that the functions $F$ and $G$ appear in the definition in non-symmetric way. One of them is usually called ``left'' and the other is ``right''. Usually it is completely impossible to memorize which one is the left and which is the right, hence this part of the definition is intentionally omitted.} for any $s\in S$ and $t\in T$ the condition $F(s)\geqslant_T t$ is equivalent to $s\geqslant_S G(t)$.
\end{defin}

When looking at $F,G$ as functors between $\cat(S)$ and $\cat(T)$, the Galois connection is nothing but the definition of an adjoint pair of functors.

\begin{rem}
An equivalent way of defining a Galois connection is to require that
\begin{equation}\label{eqGaloisEquiv}
[G(F(s))\leqslant_S s \mbox{ for any } s\in S]\mbox{ and }
[F(G(t))\geqslant_T t \mbox{ for any } t\in T].
\end{equation}
These conditions are often easier to check in practice.
\end{rem}

\section{Simplification using Galois connection}\label{secSimplification}

\subsection{Quillen--McCord theorem and the weeding operation}

We recall the classical result of Quillen~\cite{Quil} (which also appears in~\cite{McCord} in an equivalent form).

\begin{thm}[Quillen's theorem A, or Quillen's fiber theorem, or Quillen--McCord theorem]\label{thmQuilMcCord}
Let $F\colon S\to T$ be a morphism of finite partially ordered sets. Suppose that, for any $t\in T$, the geometrical realization  $|F^{-1}(T_{\geqslant t})|$ is contractible. Then $F$ induces the homotopy equivalence between $|S|$ and $|T|$.
\end{thm}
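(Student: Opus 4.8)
The plan is to prove the Quillen--McCord theorem by the standard homotopy-colimit (or, more elementarily, mapping-cylinder / poset-of-the-Grothendieck-construction) argument, specialized to finite posets where everything can be kept combinatorial. First I would recall that for a morphism $F\colon S\to T$ one can form the \emph{mapping poset} (the Grothendieck construction of the functor $\cat(S)\to\cat(T)$ viewed over $T$), namely the poset $M$ whose underlying set is $S\sqcup T$, with the order of $S$ and of $T$ retained, and with $s\leqslant t$ in $M$ whenever $F(s)\leqslant_T t$. There are obvious inclusions $i_S\colon S\hookrightarrow M$ and $i_T\colon T\hookrightarrow M$, and a retraction $r\colon M\to T$ which is the identity on $T$ and equals $F$ on $S$. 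The key point is the chain of homotopy equivalences $|S|\simeq |M|\simeq |T|$, from which the theorem follows once one checks that the composite realizes (up to homotopy) the map induced by $F$.

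The second step is to show $|i_T|\colon |T|\to |M|$ is a homotopy equivalence. This is the easy half: $r\circ i_T=\id_T$, and one builds a homotopy (a contraction along fibers) showing $i_T\circ r\simeq \id_M$. Concretely, on realizations one can use the fact that for every $t\in T$ the subposet $r^{-1}(T_{\leqslant t})\cap S = F^{-1}(T_{\leqslant t})$ together with $T_{\leqslant t}$ forms a poset with largest element $t$, hence is contractible; a standard induction over $T$ (or a straight-line homotopy in the realization pushing each point of $|S|$ towards its image) gives the deformation retraction of $|M|$ onto $|T|$. This part uses only Remark~\ref{remMinMaxCone} and does not need the hypothesis.

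The third step, which is the real content, is to show $|i_S|\colon |S|\to |M|$ is a homotopy equivalence, and here the hypothesis that each $|F^{-1}(T_{\geqslant t})|$ is contractible enters. I would argue by a Mayer--Vietoris / nerve-type decomposition of $M$ along $T$: express $|M|$ as glued out of $|S|$ and the pieces $|M_{\geqslant t}|$ for $t\in T$, noting that $M_{\geqslant t}$ has least element $t$ so is contractible, while $M_{\geqslant t}\cap S = F^{-1}(T_{\geqslant t})$ is contractible by hypothesis; an inductive application of the gluing lemma (attaching cones over contractible subspaces, cf.\ \eqref{eqStarSubcomplDecompose} and \eqref{eqCones}) shows that adding the elements of $T$ to $S$, one at a time in a linear extension of $T^{\op}$, does not change the homotopy type. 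Equivalently one can invoke Quillen's Theorem~A in its general form for the projection $M\to T$; but since we want a self-contained finite-poset proof I would carry out the explicit induction.

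The main obstacle will be step three: making the inductive gluing rigorous, i.e.\ verifying that when we pass from the subposet $M'\supseteq S$ to $M'\cup\{t\}$ (where $t$ is maximal among the not-yet-added elements of $T$), the new part glued in is precisely a cone $\cone|L|$ attached along $|L|$ with $L$ contractible, so that the inclusion is a homotopy equivalence. This requires identifying $|(M'\cup\{t\})_{<t}|$ correctly — it should deformation retract onto $|F^{-1}(T_{\geqslant t})|$ or onto a point using a largest/least element argument — and then combining the local equivalences into a global one, which is where one must be careful that the homotopies are compatible (a colimit-of-equivalences argument, or an application of the gluing lemma for cofibrations in the category of CW complexes). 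Finally I would check the naturality statement: the composite $|T|\xrightarrow{|i_T|}|M|\xleftarrow{|i_S|}|S|$, inverted appropriately, induces on realizations the map $|F|$, because $r\circ i_S=F$ as poset maps.
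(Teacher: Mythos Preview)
Your mapping-poset approach is sound and is essentially Barmak's elementary argument, but it is not the route the paper takes. The paper instead builds a topological formal context $(|S|,|T|,\I_{\Quil})$, where $\I_{\Quil}\subset |S|\times|T|$ is the union of products $|\sigma|\times|\tau|$ over pairs of chains with $\max\tau\leqslant_T F(\min\sigma)$, and then applies Smale's theorem (Corollary~\ref{corContractFibers}) to each projection separately: the fibre of $\pr_{|S|}$ over $x$ is $|T_{\leqslant F(m(x))}|$, a cone; the fibre of $\pr_{|T|}$ over $y$ is $|F^{-1}(T_{\geqslant M(y)})|$, contractible by hypothesis. This two-projection argument is more uniform and runs in parallel with the paper's proof of the Nerve theorem, but it invokes Smale's theorem as a black box and, as the paper itself remarks, only yields $|S|\simeq|T|$ without establishing that $|F|$ realises the equivalence. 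Your inductive gluing avoids Smale and does give the stronger naturality statement.

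There is, however, a convention error in your step three that makes the induction fail as written. With $s\leqslant t$ in $M$ whenever $F(s)\leqslant_T t$, the elements of $S$ sit \emph{below} those of $T$; so when you adjoin $t$ (maximal among those not yet added), one finds $(M'\cup\{t\})_{<t}=F^{-1}(T_{\leqslant t})$ and $(M'\cup\{t\})_{>t}=T_{>t}$, and neither factor of the link $|F^{-1}(T_{\leqslant t})|\ast|T_{>t}|$ is controlled by the hypothesis. The fix is to reverse the convention: set $t\leqslant s$ in $M$ whenever $t\leqslant_T F(s)$, and add the elements of $T$ in a linear extension of $T$ (minimal first). Then $(M'\cup\{t\})_{>t}=F^{-1}(T_{\geqslant t})$ on the nose, the link $|T_{<t}|\ast|F^{-1}(T_{\geqslant t})|$ is contractible because its second join factor is, and your cone-attachment step goes through via~\eqref{eqStarSubcomplDecompose} and~\eqref{eqLinkInPoset}. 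This is also the orientation implicit in the paper's $\I_{\Quil}$, where the defining inequality is $t_l\leqslant F(s_1)$.
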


We refer to~\cite{Barmak} for a modern exposition of this result, and to~\cite{Bj} for related statements. A very short proof, based on the original technique of Quillen is given Section~\ref{secTopologicalFCA}.

\begin{cor}\label{corGalois}
Suppose that $F\colon S\rightleftarrows T\colon G$ is a Galois connection. Then both $F$ and $G$ induce homotopy equivalence between the spaces $|S|$ and $|T|$.
\end{cor}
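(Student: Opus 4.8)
The plan is to deduce both halves of the statement from Quillen--McCord theorem (Theorem~\ref{thmQuilMcCord}); the role of the Galois connection is precisely to turn the relevant homotopy fibres into posets possessing a least or a greatest element, which are contractible by Remark~\ref{remMinMaxCone}.

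First I would apply Theorem~\ref{thmQuilMcCord} to $F\colon S\to T$. Fix $t\in T$. By the defining property of a Galois connection (Definition~\ref{defGaloisCon}), the relation $F(s)\geqslant_T t$ holds if and only if $s\geqslant_S G(t)$, hence
\[
F^{-1}(T_{\geqslant t})=\{s\in S\mid F(s)\geqslant_T t\}=\{s\in S\mid s\geqslant_S G(t)\}=S_{\geqslant G(t)}.
\]
The poset $S_{\geqslant G(t)}$ has the least element $G(t)$, so $|S_{\geqslant G(t)}|$ is contractible by Remark~\ref{remMinMaxCone}. Thus the hypothesis of Theorem~\ref{thmQuilMcCord} is verified and $F$ induces a homotopy equivalence $|S|\simeq|T|$.

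For the map $G\colon T\to S$ I would not repeat the computation but reduce to the case just treated. Rewriting the condition of Definition~\ref{defGaloisCon} in the form $G(t)\leqslant_S s\Leftrightarrow t\leqslant_T F(s)$ shows that $G\colon T^{\op}\rightleftarrows S^{\op}\colon F$ is again a Galois connection. Applying the previous paragraph to it, $G$ induces a homotopy equivalence $|T^{\op}|\simeq|S^{\op}|$. Since reversing the order of a poset does not change its set of chains, $K(S^{\op})=K(S)$ and $K(T^{\op})=K(T)$, so $|S^{\op}|=|S|$ and $|T^{\op}|=|T|$, and the map induced on realizations is the same one; hence $G$ is a homotopy equivalence as well. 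As an alternative for this half, once $F$ is known to be an equivalence one may invoke the relations~\eqref{eqGaloisEquiv}, which give $G\circ F\leqslant\id_S$ and $F\circ G\geqslant\id_T$ pointwise; since monotone maps of posets comparable pointwise induce homotopic maps on geometrical realizations, $|F|$ and $|G|$ turn out to be mutually homotopy inverse.

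I do not anticipate a genuine obstacle: the identification of the fibre $F^{-1}(T_{\geqslant t})$ with $S_{\geqslant G(t)}$ is nothing but a rewriting of the definition of a Galois connection, and everything else is quoted. The one point that deserves a little care is the treatment of $G$, where one must notice that a Galois connection becomes symmetric only after passing to opposite posets, and that the geometrical realization functor does not detect this order reversal.
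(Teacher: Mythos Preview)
Your proof is correct and follows the same approach as the paper: you identify $F^{-1}(T_{\geqslant t})$ with $S_{\geqslant G(t)}$ directly from the defining property of a Galois connection, invoke Remark~\ref{remMinMaxCone} for contractibility, and apply Theorem~\ref{thmQuilMcCord}. For the map $G$ the paper simply says the argument is ``completely similar'' using the $\leqslant$-version of Theorem~\ref{thmQuilMcCord}; your passage to opposite posets is exactly this reformulation made explicit, and your additional alternative via the pointwise inequalities~\eqref{eqGaloisEquiv} is a nice bonus not present in the paper.
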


\begin{proof}
By definition of the Galois connection we have
\[
F^{-1}(T_{\geqslant t})=\{s\in S\mid F(s)\geqslant t\} = \{s\in S\mid s\geqslant G(t)\}=S_{\geqslant G(t)}.
\]
The last poset has the minimal element $G(t)$, hence its geometrical realization is contractible by Remark~\ref{remMinMaxCone}. Then Quillen--McCord theorem applies. The proof for $G$ is completely similar (for this proof, Theorem~\ref{thmQuilMcCord} should be reformulated with $\leqslant$ instead $\geqslant$).
\end{proof}

Given a simplicial complex $K$, we can construct a (generally simpler) poset $\widetilde{K}$ which is Galois-connected with $K$ as follows.

\begin{con}\label{conMainConstruction}
Let $K$ be a simplicial complex. Let $W$ denote the set of simplices of $K$ which are maximal by inclusion (i.e. for any $I\in W$ there is no $J\in K$ such that $J\supset I$). Consider the subposet $\widetilde{K}\subset S_K=K\setminus\{\varnothing\}$ consisting of all simplices of $K$, which can be represented as the nonempty intersection of elements of $W$:
\[
\widetilde{K}=\left\{\bigcap\nolimits_{I\in A}I\neq\varnothing\mid A\subseteq W\right\}
\]
Consider two maps: the natural inclusion $F\colon \widetilde{K}\to K\setminus\{\varnothing\}$, and
\[
G\colon K\setminus\{\varnothing\}\to \widetilde{K},\qquad G(I)=\bigcap_{J\in W, J\supseteq I}J.
\]
It is easy to check the inequality \eqref{eqGaloisEquiv} for the constructed maps, so that the pair $(F,G)$ is a Galois connection. Therefore, $|K|\cong |S_K|\simeq|\widetilde{K}|$ by Corollary~\ref{corGalois}. We say that the poset $\widetilde{K}$ is obtained from $K$ by the \emph{weeding procedure}.
\end{con}

\begin{rem}
The above-mentioned construction may be efficient if we apply it to nerves of ``excess'' covers. Topological data analysis allows to investigate data clouds distributed on the unknown space $X$ by considering the nerves of a suitable contractible cover of $X$ and computing its simplicial homology. However, it is usually the case that the cover contains multiple intersections, so that its nerve has dimension, and the number of simplices, much bigger, than actually needed to compute its Betti numbers. This is where Construction~\ref{conMainConstruction} may find applications. We demonstrate this idea by a simple example.
\end{rem}

\begin{ex}\label{exCoverSimplify}
Let a 2-torus be covered by 9 disks as shown on Fig.~\ref{figNerveTorus}, left. This cover is contractible. Since there are quadruple intersections, the nerve $K$ of this cover has dimension $3$. The simplicial complex $K$ is shown on Fig.~\ref{figNerveTorus}, middle: it consists of $9$ tetrahedra, arranged periodically. This nerve has $9$ vertices, $36$ edges, $36$ triangles, and $9$ tetrahedra. However, to compute Betti numbers, it is natural to replace each tetrahedron on the picture by a square. This would result in a cell subdivision of a torus, having $9$ vertices, $18$ edges, and $9$ 2-dimensional cells, as shown on Fig.~\ref{figNerveTorus}, right. It is easy to show that the poset of cells of the right figure is exactly the poset $\widetilde{K}$ built from the simplicial complex $K$: each tetrahedron is treated combinatorially as a square, and all simplices which are not intersections of tetrahedra are neglected. Therefore, the passage from middle to right in Fig.~\ref{figNerveTorus} demonstrates Construction~\ref{conMainConstruction} of the weeding.
\end{ex}

\begin{figure}[h]
\begin{center}
\includegraphics[scale=0.3]{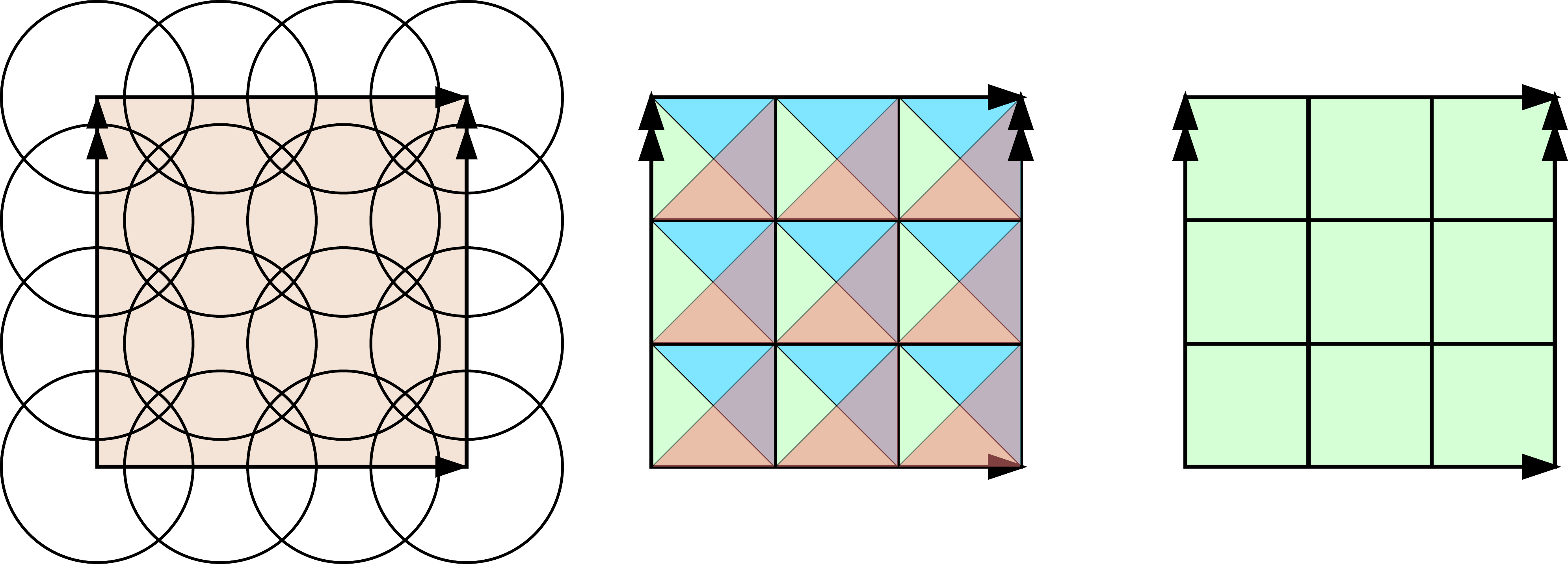}
\end{center}
\caption{A 2-torus covered by 9 discs and the nerve of the covering}\label{figNerveTorus}
\end{figure}

\begin{rem}
One remark should be made concerning the previous example. The poset $\widetilde{K}$, which we got in this example, is the poset of faces of some cell subdivision of the original space $X$. This allows to compute cellular homology of $X\simeq K\simeq \widetilde{K}$ from the chain complex of $\widetilde{K}$ (which has $\dim C_j(K)$ equal to the number of elements of $\widetilde{K}$ of rank $j$).

This may not be the case in general: it may happen that $\widetilde{K}$ is not a poset of faces of a cell subdivision. In this case, if we want to compute the homology of $\widetilde{K}$, we need to honestly pass to geometrical realization $|\widetilde{K}|$ and compute its simplicial homology. The $j$-dimensional simplices of the simplicial complex $|\widetilde{K}|$ are the chains in $\widetilde{K}$ having length $j+1$. The number of $j$-dimensional simplices of $|\widetilde{K}|$ may be bigger than the corresponding number for the original complex $K$: in this case the whole weeding algorithm does not make sense.
\end{rem}

\subsection{Simplification method: Stong reduction}

As the previous discussion shows, we can replace a simplicial complex by its subposet $\widetilde{K}$ without changing its homotopy type. Sometimes, the resulting poset can be further simplified by the iteration of the following construction. We attribute this construction to Stong, who introduced its analogue for finite topological spaces in~\cite{Stong}.

\begin{prop}\label{propStongReduction}
Assume that $s\in S$ has exactly one out-edge or exactly one in-edge in the Hasse diagram $\Gamma_S$ of the poset $S$. Then $|S\setminus s|\simeq |S|$.
\end{prop}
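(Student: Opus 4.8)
The plan is to exhibit $S \setminus s$ as a deformation retract of $S$ at the level of geometric realizations by means of Quillen--McCord theorem (Theorem~\ref{thmQuilMcCord}), applied to the inclusion $i\colon S\setminus s \hookrightarrow S$. Concretely, I would argue as follows. Suppose first that $s$ has exactly one out-edge in $\Gamma_S$, say to $s'$; I claim $s'$ is then the least element strictly above $s$, i.e. $s < s'$ and every $t > s$ satisfies $t \geqslant s'$. Indeed, if $t > s$ then along a maximal chain from $s$ to $t$ the first step must be the unique out-edge $s \to s'$, so $s' \leqslant t$. Now I would verify the hypothesis of Quillen--McCord for $i\colon S\setminus s \to S$: for each $t \in S$ we must show $|i^{-1}\big((S)_{\geqslant t}\big)|$ is contractible, where $i^{-1}((S)_{\geqslant t}) = (S\setminus s)_{\geqslant t} = \{u \in S : u \geqslant t,\ u \neq s\}$. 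If $t \not\leqslant s$, then $s \notin (S)_{\geqslant t}$ already, so $i^{-1}((S)_{\geqslant t}) = (S)_{\geqslant t}$, which has least element $t$ and is contractible by Remark~\ref{remMinMaxCone}. If $t \leqslant s$, then $i^{-1}((S)_{\geqslant t}) = (S)_{\geqslant t} \setminus \{s\}$; here I claim this poset still has a least element, namely $s'$ if $t = s$, and $t$ itself if $t < s$ (since $t < s$ means $t \neq s$, so $t$ survives the deletion and remains the minimum of the set of elements $\geqslant t$ other than $s$). Wait — one must double-check: if $t < s$, is $t$ still $\leqslant$ every element of $(S)_{\geqslant t}\setminus\{s\}$? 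Yes, trivially, since $t \leqslant u$ for all $u \in (S)_{\geqslant t}$. And if $t = s$, then $(S)_{\geqslant s}\setminus\{s\} = (S)_{>s}$, which by the claim above has least element $s'$. In every case $i^{-1}((S)_{\geqslant t})$ has a minimum, hence is contractible by Remark~\ref{remMinMaxCone}, so Quillen--McCord gives $|S\setminus s| \simeq |S|$.

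For the dual case, where $s$ has exactly one in-edge (say from $s''$), the same argument works with the order reversed: $s''$ is the largest element strictly below $s$, and one applies the version of Quillen--McCord with $\leqslant$ in place of $\geqslant$ (as already invoked in the proof of Corollary~\ref{corGalois}) to the inclusion $S\setminus s \hookrightarrow S$, checking that $(S\setminus s)_{\leqslant t}$ always has a maximum. Alternatively, one observes that $S$ has exactly one in-edge at $s$ iff $S^{\op}$ has exactly one out-edge at $s$, and $|S| = |S^{\op}|$, $|S\setminus s| = |S^{\op}\setminus s|$, so this case reduces formally to the previous one.

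The only genuinely delicate point is the combinatorial claim that a unique out-edge $s \to s'$ in the Hasse diagram forces $s'$ to be the minimum of $(S)_{>s}$ — one must be careful that "unique out-edge in the Hasse diagram" refers to covering relations, and that every element strictly above $s$ dominates a covering relation out of $s$, which holds because $S$ is finite. Once that is in place, everything else is a routine check that certain up-sets (or down-sets) acquire a least (or greatest) element after deleting $s$, and contractibility follows immediately from Remark~\ref{remMinMaxCone}. I expect no further obstacles.
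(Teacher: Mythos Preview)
Your argument is correct: for the inclusion $i\colon S\setminus s\hookrightarrow S$ you check case by case that $i^{-1}(S_{\geqslant t})$ always has a least element (namely $t$ itself when $t\neq s$, and the unique cover $s'$ when $t=s$), hence is a cone, and Quillen--McCord applies. The dual case is handled exactly as you say.

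However, this is not the route the paper takes. The paper argues directly on geometric realizations: from the unique out-edge it deduces that $|S_{>s}|=|S_{\geqslant s'}|$ is a cone, hence the link $|\lk_{K(S)}s|=|S_{<s}|\ast|S_{>s}|$ is contractible by~\eqref{eqLinkInPoset}, and then invokes the decomposition~\eqref{eqStarSubcomplDecompose}, $|S|=|S\setminus s|\cup_{|\lk s|}|\st s|$, together with the observation that gluing a contractible space along a contractible subspace does not change the homotopy type. No appeal to Quillen--McCord is made.

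This difference is not incidental. The paper explicitly remarks after Proposition~\ref{propWeedingStong} that the combination of Propositions~\ref{propStongReduction} and~\ref{propWeedingStong} yields a proof of $|\widetilde K|\simeq|K|$ \emph{independent of} Quillen--McCord. Your proof, while clean and correct, forfeits that independence: it reduces Proposition~\ref{propStongReduction} to the very theorem the paper wants an alternative to. What you gain is that the argument is purely order-theoretic and avoids the cofibration/gluing lemma; what you lose is the self-contained, elementary character the paper is aiming for at this point.
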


\begin{proof}
We tackle the case of out-edge, since the case of in-edge is completely similar (and follows from the first one by reversing the order). Let $e=(s,t)$ be the unique out-edge of $s$. Then $r>_Ss$ implies $r\geqslant_S t$. Therefore, $|S_{>s}|=|S_{\geqslant t}|$ is contractible according to Remark~\ref{remMinMaxCone}. We have
\[
|S|=|S\setminus s|\cup_{|\lk_{K(S)} s|}|\st_{K(S)} s|
\]
by~\eqref{eqStarSubcomplDecompose}. According to~\eqref{eqLinkInPoset}, there holds
\[
|\lk_{K(S)} s|=|S_{<s}|\ast|S_{>s}|.
\]
The space $|\lk_{K(S)} s|$ is contractible since $|S_{>s}|$ is contractible. Attachment of the contractible space $|\st_{K(S)} s|$ along the contractible subspace $|\lk_{K(S)}s|$ does not change the homotopy type, hence $|S|\simeq |S\setminus s|$.
\end{proof}

\begin{defin}\label{defUpDownCore}
If the element $s\in S$ satisfies the property that there exists $t>s$ such that $r>s$ implies $r\geqslant t$ (this means $s$ has unique out-edge in $\Gamma_S$), then $s$ is called an \emph{upbeat} in the terminology of \cite{May}, or \emph{linear} in the terminology of Stong~\cite{Stong}. Similarly, if there exists $t<s$ such that $r<s$ implies $r\leqslant t$ (this means $s$ has a unique in-edge), then $s$ is called a \emph{downbeat} or \emph{colinear}. If the poset $S$ does not have downbeats and upbeats it is called a \emph{core}.
\end{defin}

\begin{figure}[h]
\begin{center}
\includegraphics[scale=0.3]{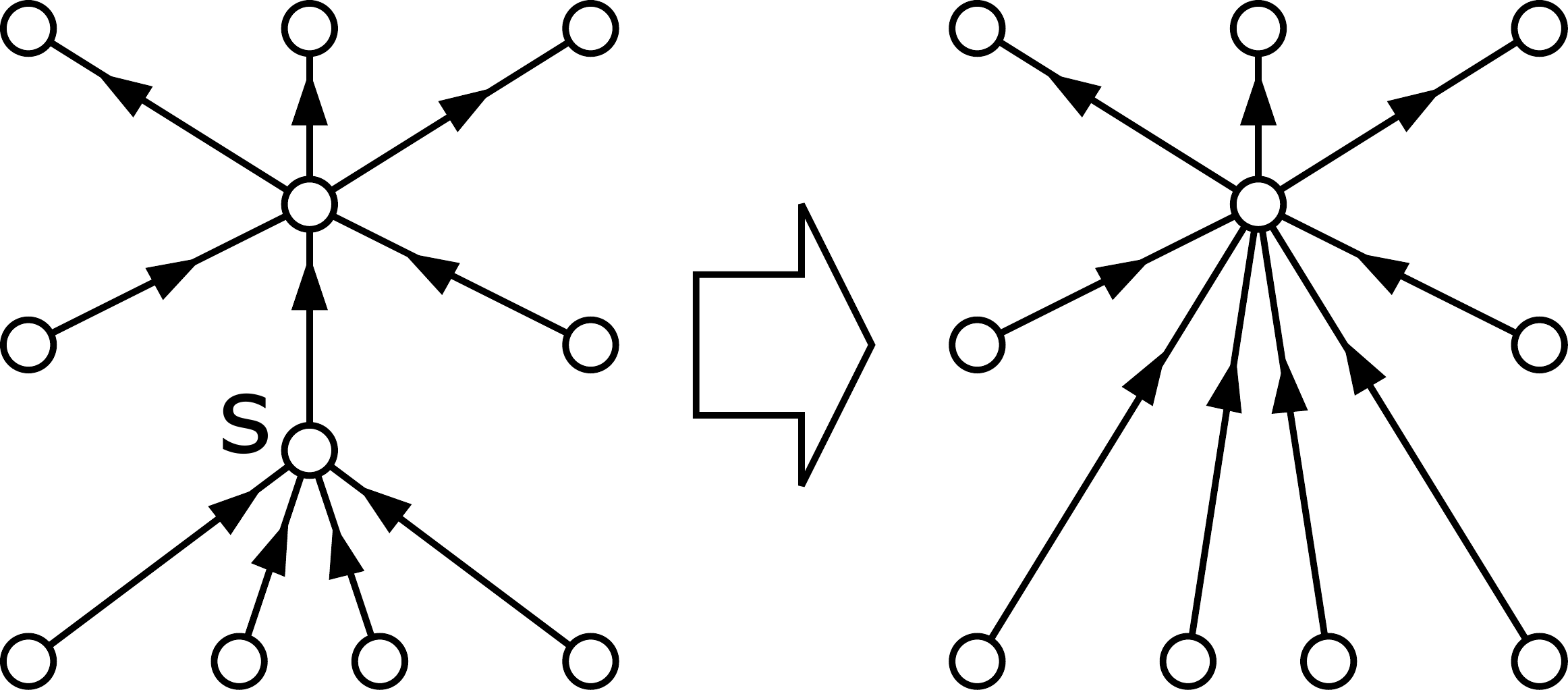}
\end{center}
\caption{Removing the upbeat $s$ from the Hasse diagram}\label{figUpbeatRemove}
\end{figure}

\begin{rem}
On the level of Hasse diagrams, removing the upbeat $s$ with the unique out-edge $e=(s,t)$ corresponds to contracting the edge $e$. All edges that entered $s$ will enter $t$ after this operation, see Fig.~\ref{figUpbeatRemove}. Sometimes this contraction produces redundant directed edges which should not be present in the diagram (since they can be represented by combinations of other arrows). Redundant arrows should be removed, see example on Fig.~\ref{figUpbeatRemoveAux}.
\end{rem}

\begin{figure}[h]
\begin{center}
\includegraphics[scale=0.3]{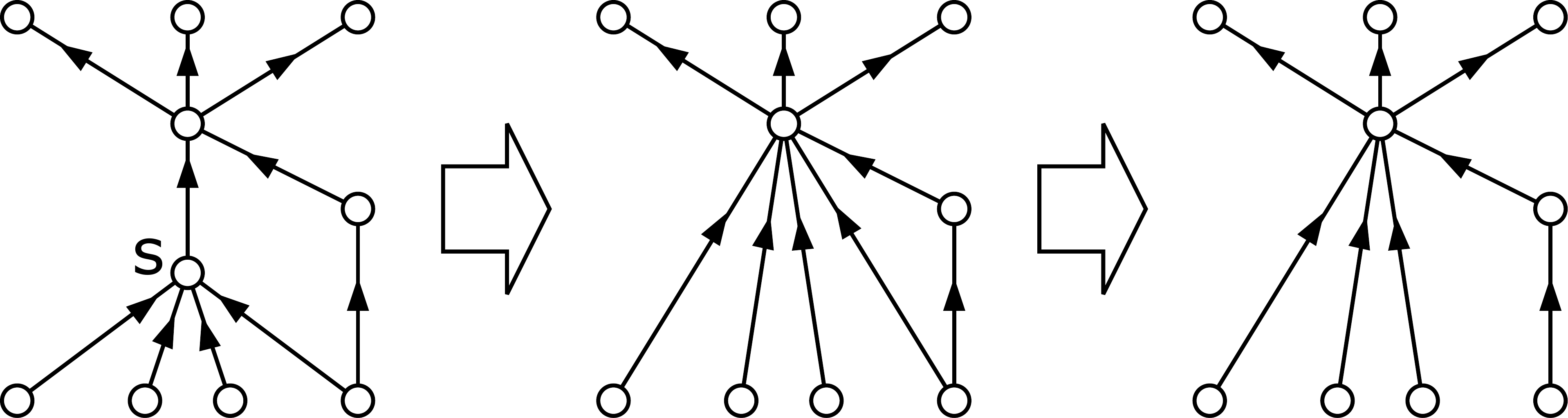}
\end{center}
\caption{Removing the upbeat $s$ and removing redundant arrows}\label{figUpbeatRemoveAux}
\end{figure}

\begin{defin}
If a poset $T$ is obtained from a poset $S$ by a sequence of downbeats' and upbeats' removals, we say that $T$ is obtained from $S$ by Stong reduction. If, moreover, $T$ is a core, then $T$ is called \emph{the core of} $S$ and denoted $\core S$.
\end{defin}

A core of $S$ is defined uniquely up to isomorphism, according to \cite[Thm.2 and Thm.3]{Stong}: it does not depend on the sequence of removals.

\begin{rem}
In the theory of Stanley--Reisner algebras of simplicial complexes there is a notion of a (simplicial) core of a simplicial complex, see~\cite[Def.2.2.15]{BPnew}. A simplicial complex $L$ is called the simplicial core of $K$, if, for some $s\geqslant 0$, there holds $K=L\ast\Delta_{[s]}$, and $L$ is not a cone. Here $\Delta_{[s]}$ is the full simplex on the vertex set $[s]$. It should be mentioned, that simplicial core does not coincide with the core given by Definition~\ref{defUpDownCore} in general. Indeed, if $K=L\ast\Delta_{[s]}$ and $s\geqslant 1$, then $K$ is contractible, while $L$ may not be contractible. So far, the simplicial core can change the homotopy type, while the core given by Definition~\ref{defUpDownCore} preserves it.
\end{rem}

We give another remark concerning Proposition~\ref{propStongReduction}.

\begin{rem}
The proof of Proposition~\ref{propStongReduction} suggests that there is a more general statement: if either $|S_{>s}|$ or $|S_{<s}|$ is contractible, then $s$ can be removed from $S$ without changing its homotopy type. In terms of finite topological spaces, this idea was proposed by Osaki~\cite{Osaki}. It should be noted however, that contractibility of a poset is difficult to confirm in practice.
However, there is a natural homological version which can be realized algorithmically. If either $|S_{>s}|$ or $|S_{<s}|$ is acyclic, then $s$ can be removed from $S$ without changing its homology type.
\end{rem}


\subsection{Weeding is obtained by Stong reduction}
The weeding defined in Construction~\ref{conMainConstruction} can be obtained as a sequence of upbeat removals.

\begin{prop}\label{propWeedingStong}
The poset $\widetilde{K}$ is obtained from $S_K=K\setminus\{\varnothing\}$ by Stong reduction. It follows that $\core S_K$ is isomorphic to $\core\widetilde{K}$.
\end{prop}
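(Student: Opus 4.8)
The plan is to delete the simplices of $S_K\setminus\widetilde K$ one at a time, checking that each of them is an upbeat (in the sense of Definition~\ref{defUpDownCore}) at the moment of its deletion; Proposition~\ref{propStongReduction} then guarantees that every such deletion is a legitimate step of Stong reduction, and the assertion about cores follows from the fact that $\core$ does not depend on the chosen sequence of removals (\cite[Thm.\,2 and Thm.\,3]{Stong}).

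First I would record two elementary properties of the closure map $G\colon S_K\to\widetilde K$, $G(I)=\bigcap_{J\in W,\ J\supseteq I}J$ of Construction~\ref{conMainConstruction}. Since $K$ is finite, every nonempty simplex $I$ is contained in at least one maximal simplex, so the intersection defining $G(I)$ is over a nonempty subset of $W$; hence $G(I)\in\widetilde K$, $G(I)\supseteq I$, and $G(I)=I$ if and only if $I\in\widetilde K$. The key point is that $G(I)$ is the \emph{smallest} element of $\widetilde K$ containing $I$: if $I'\in\widetilde K$ satisfies $I'\supseteq I$, write $I'=\bigcap_{J\in A}J$ with $A\subseteq W$; every $J\in A$ contains $I'$ and hence $I$, so $A\subseteq\{J\in W\mid J\supseteq I\}$ and therefore $G(I)\subseteq I'$.

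Next I would enumerate $S_K\setminus\widetilde K=\{I_1,\ldots,I_N\}$ so that $\dim I_1\geqslant\dim I_2\geqslant\cdots\geqslant\dim I_N$, put $S^{(0)}=S_K$ and $S^{(k)}=S^{(k-1)}\setminus\{I_k\}$, and argue by induction on $k$ that $I_k$ is an upbeat of $S^{(k-1)}$ with unique out-edge $\bigl(I_k,G(I_k)\bigr)$. The crux is the identity $S^{(k-1)}_{>I_k}=\{I'\in\widetilde K\mid I'\supsetneq I_k\}$: any simplex strictly above $I_k$ has dimension strictly greater than $\dim I_k$; if such a simplex lies in $\widetilde K$ it has never been deleted, while if it does not lie in $\widetilde K$ it occurs among $I_1,\ldots,I_{k-1}$ by the choice of ordering and so has already been removed. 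Since $I_k\notin\widetilde K$ we have $G(I_k)\supsetneq I_k$, and by the previous paragraph $G(I_k)$ is the least element of the poset $S^{(k-1)}_{>I_k}$; a finite poset with a least element has exactly one minimal element, so $I_k$ has exactly one out-edge in the Hasse diagram of $S^{(k-1)}$. (Deletions of simplices of the same dimension do not interfere with each other, since distinct simplices of equal dimension are incomparable.) Thus Proposition~\ref{propStongReduction} applies at each step, and after $N$ steps we reach $S^{(N)}=\widetilde K$.

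Finally, $\widetilde K$ has been obtained from $S_K$ by a sequence of upbeat removals, i.e. by a Stong reduction; appending to it any Stong reduction of $\widetilde K$ to a core exhibits $\core\widetilde K$ as a core of $S_K$ as well, whence $\core S_K\cong\core\widetilde K$ by the uniqueness of the core. I expect the only genuine content of the argument to be the identity $S^{(k-1)}_{>I_k}=\{I'\in\widetilde K\mid I'\supsetneq I_k\}$ together with the observation that $G(I_k)$ is its minimum; the remainder is bookkeeping about the ordering and about the independence of the individual deletions.
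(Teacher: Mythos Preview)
Your proposal is correct and follows essentially the same approach as the paper: both order the simplices of $S_K\setminus\widetilde K$ by decreasing dimension, and both verify that at the moment a simplex $I$ is removed every element above it already lies in $\widetilde K$ with $G(I)=\cl(I)$ serving as the unique minimum, so $I$ is an upbeat. Your write-up is somewhat more explicit (you spell out why $G(I)$ is the least element of $\widetilde K$ above $I$ and you separately justify the statement about cores), but the mathematical content is the same.
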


\begin{proof}
For $I\in K$ consider ``the closure of $I$'' that is
\[
\cl(I)\eqd \bigcap_{J\in W,J\supseteq I}J\in \widetilde{K},
\]
where $W$ is the set of maximal simplices of $K$. In the notation of Construction~\ref{conMainConstruction}, this means $\cl=F\circ G\colon K\setminus\{\varnothing\}\to K\setminus\{\varnothing\}$. Clearly, $\cl(I)\supseteq I$. We show that all simplices $I$ such that $I\neq\cl(I)$ can be consecutively removed from $K$ by Stong reductions. Let us proceed by induction starting from the biggest possible cardinality $|I|$, so far, the induction hypothesis depends on the decreasing parameter $n$. Assume that all simplices $J\in K$ with $J\neq\cl(J)$ and $|J|>n$ have already been removed, and let us pick a simplex $I$ with $|I|=n$ and $I\neq\cl(I)$. For any $J>I$ we have $J=\cl(J)$ since otherwise $J$ would have been removed at the previous steps. Therefore $J>I$ implies $J=\cl(J)\geqslant \cl(I)$. This shows that $I$ is an upbeat, and it can be removed from the poset. Proceeding inductively, at the end we get the subposet $\widetilde{K}$ of $S_K$.
\end{proof}

The process in the proof of Proposition~\ref{propWeedingStong} is shown schematically on Fig.~\ref{figBoolRetraction}. If $I\neq\cl(I)$, then all the subsets $\{J\mid I\leqslant J<\cl(I)\}$ can be consecutively removed starting from the top layer.

\begin{figure}[h]
\begin{center}
\includegraphics[scale=0.25]{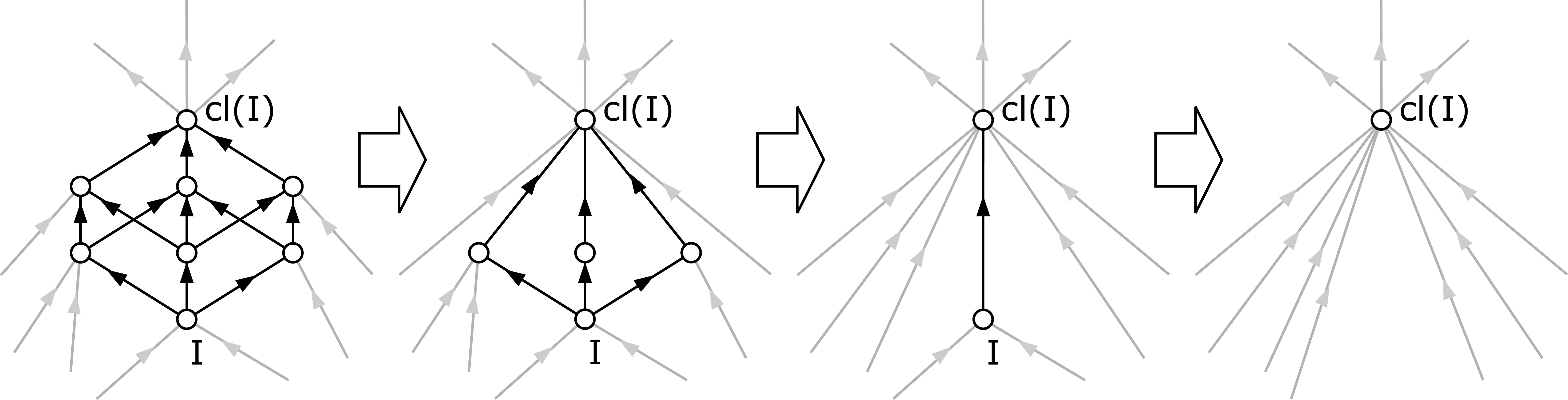}
\end{center}
\caption{Consecutive Stong retraction of the elements between $I$ and $\cl(I)$.}\label{figBoolRetraction}
\end{figure}

Propositions~\ref{propWeedingStong} and~\ref{propStongReduction} give the proof of the homotopy equivalence $|\widetilde{K}|\simeq |K|$ which is independent of the Quillen--McCord theorem.

\section{Nerve complexes of cellular spaces and their duality}\label{secNervesDuality}

\subsection{Nerve complexes of combinatorial objects}
First we give the definition of the nerve complex of a convex polytope, introduced in~\cite{AB}.

\begin{con}
Recall that a polytope is a convex hull of a finite set of points in some euclidean space $\Ro^n$, or, equivalently, a bounded intersection of a finite collection of closed affine half-spaces. Let $\ca{P}$ denote the poset of proper faces of $P$ (i.e. all faces except $\varnothing$ and $P$ itself). The convex polytope
\[
P^*=\{l\in(\Ro^n)^*\mid \langle l,x\rangle\geqslant -1 \mbox{ for all }x\in P\}
\]
is called \emph{polar dual} to $P$. $j$-dimensional faces of $P$ bijectively correspond to $(n-1-j)$-dimensional faces of $P^*$, so that the face poset $\ca{P}^*$ is isomorphic to $\ca{P}^{\op}$, the poset $\ca{P}$ with the reversed order.

An $n$-dimensional polytope is called simple if each of its vertices is contained in exactly $n$ facets (or, equivalently, in exactly $n$ edges). A polytope is called simplicial, if all its proper faces are simplices. A polytope dual to a simple polytope is simplicial and vice versa.
\end{con}

\begin{defin}
Let $P$ be a convex $n$-dimensional polytope and $\F_1,\ldots,\F_m$ be all its facets. The nerve $K_P$ of the covering $\bigcup_i\F_i$ of the boundary $\dd P$ is called \emph{the nerve complex of} $P$. In other words,
\[
K_P=\{I=\{i_1,\ldots,i_k\}\subset [m]\mid U_{i_1}\cap\ldots\cap U_{i_k}\neq\varnothing\}.
\]
\end{defin}

Since the nonempty intersections of facets of $P$ are the faces of $P$, which are contractible, Alexandrov Nerve theorem implies $K_P\simeq \dd P\cong S^{n-1}$. Moreover, if $P$ is simple, then $K_P$ coincides with the boundary of the dual polytope $P^*$. In this case, the nerve complex $K_P=\dd P^*$ is a simplicial sphere.

The nerve complexes were studied in \cite{AB} in connection with toric topology. In particular, nerve complexes allow to describe equivariant homotopy types of certain degenerate quadrics' intersections, called moment-angle spaces. Also, there was an attempt to use nerve complexes in the study of combinatorics of non-simple polytopes: this leads to certain formula, generalizing Dehn--Sommerville relations. As a byproduct, it was noticed that the nerve complex contains not only the homotopical information (which obviously follows from the Nerve theorem) but also the combinatorial information about the polytope. The combinatorics of $P$ can be completely reconstructed from the nerve complex $K_P$ by the following simple observation:
\[
\ca{P}^{\op}=\widetilde{K_P}.
\]
Recall that the poset $\widetilde{K_P}\subset K_P$ is the weeding of $K_P$ given by Construction~\ref{conMainConstruction}. In \cite{AB}, we proved the following

\begin{prop}\label{propNerveCpxPolytope}
If $I\in \widetilde{K_P}\subseteq K_P$ corresponds to a face $F\subset P$ of dimension $j$, then $\lk_{K_P}I$ is homotopy equivalent to the sphere $S^{j-1}$ (where by definition we put $S^{-1}=\varnothing$). Otherwise, i.e. for $I\in K_P\setminus(\widetilde{K_P}\cup\varnothing)$, the complex $\lk_{K_P}I$ is contractible.
\end{prop}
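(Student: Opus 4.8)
The plan is to identify the link $\lk_{K_P}I$ with the nerve of a family of faces lying inside the face $F_I:=\bigcap_{i\in I}\F_i$ of $P$, and then to read off its homotopy type from the Nerve theorem (Theorem~\ref{thmNerveThm}), the dichotomy in the statement corresponding to whether $I$ records \emph{all} facets of $P$ through $F_I$ or only some of them. Recall that $F_I$ is a nonempty face of $P$ because $I\in K_P$, and that the identification $\ca{P}^{\op}=\widetilde{K_P}$ recorded above says precisely that $I\in\widetilde{K_P}$ if and only if $I=I_{F_I}$, where $I_F:=\{i\mid F\subseteq\F_i\}$ denotes the set of facets of $P$ containing a face $F$; in particular $I\in K_P\setminus\widetilde{K_P}$ with $I\neq\varnothing$ means $I\subsetneq I_{F_I}$ (one always has $I\subseteq I_{F_I}$).

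First I would unwind the definition of the link: for $I\in K_P$ one has
\[
\lk_{K_P}I=\Bigl\{\,J\subseteq[m]\setminus I\ \Bigm|\ F_I\cap\bigcap_{j\in J}\F_j\neq\varnothing\,\Bigr\},
\]
which is exactly the nerve of the family $\{\F_j\cap F_I\}_{j\in[m]\setminus I}$ of subsets of the polytope $F_I$ (discarding the indices $j$ with $\F_j\cap F_I=\varnothing$, so as not to create ghost vertices, which matches the convention in the definition of $\lk$). Every nonempty finite intersection from this family is again a face of $P$ contained in $F_I$, hence contractible, and $F_I$ with its polytopal face structure is a cell complex in which each $\F_j\cap F_I$ is a subcomplex; so Theorem~\ref{thmNerveThm} applies over the space $Z_I:=\bigcup_{j\in[m]\setminus I}(\F_j\cap F_I)\subseteq F_I$ and gives $|\lk_{K_P}I|\simeq Z_I$. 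It then remains only to identify the subspace $Z_I$ in the two cases.

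If $I\notin\widetilde{K_P}$ and $I\neq\varnothing$, I would pick $i_0\in I_{F_I}\setminus I$; then $F_I\subseteq\F_{i_0}$ forces $\F_{i_0}\cap F_I=F_I$, so $Z_I=F_I$ and hence $|\lk_{K_P}I|\simeq F_I\simeq\pt$. (Alternatively, and more directly: $\{i_0\}\in\lk_{K_P}I$, and $J\cup\{i_0\}\in\lk_{K_P}I$ whenever $J\in\lk_{K_P}I$, so $i_0$ lies in every maximal simplex and $\lk_{K_P}I$ is a cone with apex $i_0$, hence contractible.) If instead $I\in\widetilde{K_P}$, corresponding to the face $F=F_I$ of dimension $j$, so $I=I_F$, then for $i\notin I$ each $\F_i\cap F$ is a \emph{proper} face of $F$, whence $Z_I\subseteq\dd F$; conversely every facet $G$ of $F$ is the intersection of the facets of $P$ containing it, and at least one such facet $\F_i$ does not contain $F$ (otherwise $G\supseteq\bigcap_{F\subseteq\F_k}\F_k=F$), so $G\subseteq\F_i\cap F\subseteq Z_I$, and since $\dd F$ is the union of its facets we get $Z_I=\dd F$. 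Therefore $|\lk_{K_P}I|\simeq\dd F\cong S^{\dim F-1}=S^{j-1}$, with the degenerate case $j=0$ reducing to $\dd F=\varnothing=S^{-1}$ (consistently, $\lk_{K_P}I_v=\{\varnothing\}$ when $v$ is a vertex).

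I do not expect a serious obstacle: the argument is essentially bookkeeping. The two points that need care are checking that the family $\{\F_j\cap F_I\}$ genuinely meets the hypotheses of the Nerve theorem inside $F_I$ (contractibility of all finite intersections and the cell-subcomplex condition), and keeping track of the $S^{-1}=\varnothing$ convention when $F_I$ is a vertex. The only genuinely combinatorial input is the identity $Z_I=\dd F$ in the second case, which rests on the standard fact that every proper face of a polytope is the intersection of the facets containing it.
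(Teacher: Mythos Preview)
Your argument is correct. Note, however, that the paper does not actually prove this proposition here: it is quoted from~\cite{AB}. What the paper does prove is the contractible half, in the greater generality of an arbitrary simplicial complex (Proposition~\ref{propLinksAcyclic}), via the identity $\lk_KI=\Delta_{\cl(I)\setminus I}\ast\lk_K\cl(I)$, which is exactly your ``alternative'' cone argument (with $i_0\in\cl(I)\setminus I$). Your primary route---recognising $\lk_{K_P}I$ as the nerve of the cover $\{\F_j\cap F_I\}_{j\notin I}$ of $Z_I\subseteq F_I$ and then invoking Theorem~\ref{thmNerveThm}---is a pleasant unified treatment: it handles both cases in one stroke by computing $Z_I$ as $F_I$ or $\dd F_I$, and it makes the appearance of $S^{j-1}$ completely transparent. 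The trade-off is that the paper's cone argument for the contractible case is purely combinatorial and works for any simplicial complex, not just $K_P$, whereas your Nerve-theorem argument uses the polytope geometry (convexity of faces) essentially; your parenthetical alternative recovers that generality.
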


This gives an algorithmic way to determine, whether a simplex $I\in K_P$ belongs to $\widetilde{K_P}$ or not, without having to intersect all possible combinations of maximal simplices from $W$. The alternative
\[
\mbox{``a simplicial complex is contractible or homotopy equivalent to a sphere''}
\]
can be decided by computing its simplicial homology. Proposition~\ref{propNerveCpxPolytope} was further used in \cite{AyDepth} to show that the depth of the Stanley--Reisner algebra $\ko[K_P]$ (i.e. the maximal length of regular sequences in this algebra) coincides with $\dim P$.

Proposition~\ref{propNerveCpxPolytope} can be naturally extended to all simplicial complexes.

\begin{prop}\label{propLinksAcyclic}
Let $K$ be a simplicial complex. For every simplex $I\in K\setminus(\widetilde{K}\cup\varnothing)$, the link $\lk_KI$ is contractible.
\end{prop}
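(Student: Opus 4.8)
The plan is to show that whenever $I \in K \setminus (\widetilde{K} \cup \varnothing)$, the element $I$ is an upbeat in the poset obtained from $S_K$ after removing all larger "non-closed" simplices — essentially reusing the mechanism of the proof of Proposition~\ref{propWeedingStong} — and then to translate "upbeat" into "contractible link". Recall from Construction~\ref{conMainConstruction} the closure map $\cl = F \circ G$, with $\cl(I) = \bigcap_{J \in W, J \supseteq I} J$, and that $\widetilde{K} = \{I \in K \setminus \varnothing \mid I = \cl(I)\}$. For $I \notin \widetilde{K} \cup \varnothing$ we have $I \subsetneq \cl(I)$, and $\cl(I)$ is the unique minimal element of $\widetilde{K}$ containing $I$.

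First I would analyze $\lk_{K}I$ directly via formula~\eqref{eqLinkInPoset} applied after passing to the barycentric picture, or more cleanly: I claim $|\lk_K I|$ is contractible because $\cl(I) \setminus I$ provides a "cone point" in an appropriate sense. Concretely, consider the subcomplex $\st_K I$; every simplex of $K$ containing $I$ is contained in some maximal simplex $J \in W$ with $J \supseteq I$, hence contained in $\cl(I)$ together with that maximal simplex — but more to the point, $I \cup \cl(I) = \cl(I) \in K$, so $\cl(I)$ itself is a simplex of $\st_K I$ containing $I$. Pick any vertex $v \in \cl(I) \setminus I$ (nonempty since $I \subsetneq \cl(I)$). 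I want to show $v$ is a "cone vertex" for $\lk_K I$, i.e. that for every simplex $J \in \lk_K I$ we have $J \cup \{v\} \in \lk_K I$. This amounts to: if $I \sqcup J \in K$ and $v \notin I \sqcup J$, then $I \sqcup J \sqcup \{v\} \in K$. Here is where the structure of $\cl$ enters: $I \sqcup J \in K$ means $I \sqcup J$ is contained in some maximal simplex $M \in W$; since $M \supseteq I$, by definition of $\cl(I)$ we get $M \supseteq \cl(I) \ni v$; hence $I \sqcup J \sqcup \{v\} \subseteq M \in K$, so it is a simplex, and (if $v \notin I \sqcup J$) it lies in $\lk_K I$. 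Therefore $\lk_K I$ is a cone with apex $v$, so $|\lk_K I|$ is contractible.

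The key steps in order are: (1) recall $I \subsetneq \cl(I)$ for $I \notin \widetilde{K} \cup \varnothing$ and fix $v \in \cl(I)\setminus I$; (2) observe that every simplex containing $I$ lies in a maximal simplex $M \supseteq I$, which by definition of $\cl(I)$ satisfies $M \supseteq \cl(I)$, hence $M \ni v$; (3) conclude that $\lk_K I$ is a cone over vertex $v$ (closed under adding $v$), hence contractible. The main obstacle — really the only subtlety — is step (2): one must be careful that "maximal simplex" is used correctly and that $\cl(I)$ is indeed the intersection of \emph{all} maximal simplices containing $I$, so that any single such maximal simplex already contains $\cl(I)$; this is immediate from the definition but is the crux of why the argument works. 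Note this gives a cleaner proof than invoking Proposition~\ref{propWeedingStong}, though one could alternatively deduce it from that proposition by noting that removing an upbeat $s$ with unique out-edge $(s,t)$ forces $|S_{>s}|$ contractible, and $S_{>s}$ corresponds (via~\eqref{eqLinkInPoset}) to a piece of the link; I would mention this as a remark but present the direct cone argument as the proof.
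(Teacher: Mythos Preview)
Your proof is correct and follows essentially the same approach as the paper: both use that $\cl(I)\supsetneq I$ and that every maximal simplex containing $I$ already contains all of $\cl(I)$, so that $\lk_KI$ is a cone. The only cosmetic difference is that the paper records the slightly stronger identity $\lk_KI=\Delta_{\cl(I)\setminus I}\ast\lk_K\cl(I)$ (a join with the full simplex on $\cl(I)\setminus I$), whereas you pick a single vertex $v\in\cl(I)\setminus I$ as cone apex; the underlying argument is identical.
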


\begin{proof}
Consider the closure operator $\cl\colon K\to K$, defined by $\cl(I)=\bigcap_{J\in W,J\subseteq I}J$, as in the proof of Proposition~\ref{propWeedingStong}. By assumption, $I\notin \widetilde{K}$, therefore $\cl(I)$ is strictly larger than $I$. We have $\lk_KI=\Delta_{\cl(I)\setminus I}\ast \lk_K\cl(I)$, where $\Delta_{\cl(I)\setminus I}$ is the full simplex on the nonempty set $\cl(I)\setminus I$. Since $\cl(I)\setminus I$ is nonempty, $\lk_KI$ is a cone, hence contractible.
\end{proof}

The construction of nerve complexes can be extended from the class of boundaries of convex polytopes to a larger class of nice cell complexes. Let $Q$ be a finite cell complex.

\begin{defin}
A cell complex $Q$ is called \emph{combinatorial} if it satisfies two properties:
\begin{enumerate}
  \item $Q$ is regular, i.e. all attaching maps are injective.
  \item The intersection of any two closed cells of $Q$ is either empty or a unique closed cell of $Q$.
\end{enumerate}
\end{defin}

Let $S_Q$ denote the poset of closed cells of a combinatorial cell complex $Q$, ordered by inclusion. If $Q$ is combinatorial, then so are its $i$-skeleta, as well as the complexes $\dd F$ for any cell $F\subset Q$. Therefore, the standard induction argument shows that $Q\cong |S_Q|$.

\begin{defin}\label{defSimpClosure}
Let $Q$ be a combinatorial cell complex with the vertex set $V$. Then \emph{the simplicial closure} $\scl(Q)$ of $Q$ is defined as the simplicial complex on the set $V$, whose simplices have the form $\{i_1,\ldots,i_k\}\in \scl(Q)$ if and only if $i_1,\ldots,i_k$ lie in one closed cell of $Q$.
\end{defin}

This means that, to obtain $\scl(Q)$, one replaces each cell $F$ of $Q$ by a simplex $\scl(F)$ on the vertex set of $F$. For example, squares are replaced by tetrahedra, and so on.

\begin{prop}\label{propReplaceBySimplex}
If $Q$ is combinatorial, then $\scl(Q)$ is homotopy equivalent to $Q$.
\end{prop}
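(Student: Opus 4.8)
The plan is to compare the poset $S_Q$ of closed cells of $Q$ with the poset $S_{\scl(Q)}=\scl(Q)\setminus\{\varnothing\}$ of nonempty simplices of $\scl(Q)$ by means of the Quillen--McCord theorem (Theorem~\ref{thmQuilMcCord}), applied to the map sending a closed cell to its vertex set. Since $Q\cong|S_Q|$ (as observed right after the definition of a combinatorial cell complex) and $|S_{\scl(Q)}|\cong|\scl(Q)|$ (because $K(S_{\scl(Q)})$ is the barycentric subdivision of $\scl(Q)$, cf. the remark following Definition~\ref{defPosetRealiz}), it suffices to exhibit a morphism of posets $\phi\colon S_Q\to S_{\scl(Q)}$ for which the contractible-fiber hypothesis of Theorem~\ref{thmQuilMcCord} holds.

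First I would set $\phi(F)=V(F)$, the set of $0$-cells of $Q$ contained in the closed cell $F$. Every nonempty closed cell contains at least one vertex (induct on dimension: the boundary of a cell of positive dimension is a nonempty subcomplex), and all vertices of $F$ lie in the single closed cell $F$, so $V(F)$ is a nonempty simplex of $\scl(Q)$; thus $\phi$ is well defined. It is monotone, since $F\subseteq G$ (inclusion of closed cells) at once gives $V(F)\subseteq V(G)$.

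The key step is the computation of the fibers. For $\sigma\in S_{\scl(Q)}$ one has
\[
\phi^{-1}\bigl((S_{\scl(Q)})_{\geqslant\sigma}\bigr)=\{F\in S_Q\mid \sigma\subseteq V(F)\},
\]
the poset of closed cells whose vertex set contains $\sigma$, ordered by inclusion. I claim this poset has a least element, which by Remark~\ref{remMinMaxCone} makes its geometric realization contractible. This family of cells is finite, and it is nonempty because $\sigma\in\scl(Q)$ means that $\sigma$ lies in some closed cell. The intersection of all closed cells in the family contains every vertex of $\sigma$, hence is nonempty, and by iterating the second condition in the definition of a combinatorial cell complex it equals a single closed cell $G$. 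Then $\sigma\subseteq V(G)$, so $G$ is itself a member of the family, and $G\subseteq F$ for every $F$ in the family, so $G$ is the least element. Quillen--McCord then gives that $\phi$ induces a homotopy equivalence $|S_Q|\simeq|S_{\scl(Q)}|$, and combined with the two homeomorphisms above this yields $Q\simeq\scl(Q)$.

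I expect the only friction to be in the two facts about combinatorial cell complexes on which the argument rests: that a nonempty finite intersection of closed cells is again a single closed cell (iterate the intersection condition, checking nonemptiness at each stage), and that inclusion of closed cells coincides with the face relation in $S_Q$ (so that a set-theoretic least element is an order-theoretic least element). Both are standard for regular cell complexes, and both are precisely where the regularity and intersection hypotheses on $Q$ are genuinely used; neither requires any real work.
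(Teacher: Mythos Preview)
Your argument is correct, but it takes a different route from the paper. The paper proves the proposition via the Nerve theorem: it covers $|K(S_Q)|\cong Q$ by the open stars $|\st_{K(S_Q)}\{v\}|$ of the vertices $v$ of $Q$, checks that any nonempty finite intersection of such stars is again a star (namely $|\st_{K(S_Q)}(\bigvee v_i)|$, where $\bigvee v_i$ is the smallest cell containing all the $v_i$), hence contractible, and then identifies the nerve of this covering with $\scl(Q)$.

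Your approach instead applies Quillen--McCord directly to the vertex-set map $\phi\colon S_Q\to S_{\scl(Q)}$. In effect you are constructing the Galois connection $\phi\colon S_Q\rightleftarrows S_{\scl(Q)}\colon\psi$, where $\psi(\sigma)$ is the smallest closed cell containing $\sigma$; your least-element argument is exactly the verification that $\psi$ is the adjoint, so Corollary~\ref{corGalois} would already finish the job. Both proofs hinge on the same combinatorial fact---existence of a unique minimal cell $\bigvee v_i$ containing a given set of vertices---but package it differently: the paper feeds it into the contractibility of intersections of stars, while you feed it into the contractibility of Quillen fibers. Your version is slightly more self-contained (it avoids the auxiliary covering) and, unlike the alternative the paper sketches after the proposition via Proposition~\ref{propReconCombCellCpx}, it does not require the extra hypothesis that every cell be an intersection of maximal cells.
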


\begin{proof}
For any set of cells $F_1,\ldots,F_k$ of $Q$ there is an alternative: (1) there is no cell $F\subseteq Q$ such that all $F_i$ are its subcells; (2) there is a unique minimal cell $F=\bigvee F_i\subseteq Q$ containing all $F_i$. In the latter case, $\bigvee F_i$ can be obtained by intersecting all cells containing $F_i$: according to the definition, this gives a unique cell.

Now consider the covering of $|K(S_Q)|$ by the subsets $|\st_{K(S_Q)}\{v\}|$ for all vertices $v$ of $Q$ (recall that $K(S_Q)$ denotes the simplicial complex of chains of $S_Q$). We have either $|\st_{K(S_Q)}\{v_1\}|\cap\cdots\cap |\st_{K(S_Q)}\{v_k\}|=\varnothing$ or
\[
|\st_{K(S_Q)}\{v_1\}|\cap\cdots\cap |\st_{K(S_Q)}\{v_k\}|=\left|\st_{K(S_Q)}\scl\left(\bigvee\nolimits_{i=1}^kv_i\right)\right|,
\]
which is a cone, hence contractible. Therefore, the covering is contractible. It is easily seen that $\scl(Q)$ coincides with the nerve of the covering $\bigcup_v|\st_{K(S_Q)}\{v\}|=|S_Q|$, hence the Nerve theorem implies $\scl(Q)\simeq |S_Q|\cong Q$.
\end{proof}

In some cases Proposition~\ref{propReplaceBySimplex} can also be derived from the following proposition and Quillen--McCord theorem.

\begin{prop}\label{propReconCombCellCpx}
Let $Q$ be a combinatorial cell complex in which every cell is the intersection of some collection of maximal cells. Then the poset $S_Q$ is isomorphic to $\widetilde{\scl(Q)}$, the weeding of the simplicial complex $\scl(Q)$.
\end{prop}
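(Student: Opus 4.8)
The plan is to exhibit an explicit order-isomorphism between $S_Q$ (the poset of closed cells of $Q$) and $\widetilde{\scl(Q)}$ (the weeding poset of the simplicial complex $\scl(Q)$), and to verify that it respects the inclusion order in both directions. First I would set up notation: let $V$ be the vertex set of $Q$, and for a closed cell $F\subseteq Q$ let $V(F)\subseteq V$ be its vertex set. The map $\Phi\colon S_Q\to \scl(Q)$, $F\mapsto V(F)$, is the obvious candidate. By the definition of $\scl(Q)$ (Definition~\ref{defSimpClosure}), $V(F)$ is indeed a simplex of $\scl(Q)$, in fact a maximal one when $F$ is a maximal cell, since the maximal simplices of $\scl(Q)$ are exactly the vertex sets of the maximal cells of $Q$. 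Thus $\Phi$ sends maximal cells to elements of the generating set $W$ of $\scl(Q)$ used in Construction~\ref{conMainConstruction}.

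The heart of the argument is to show that $\Phi$ lands in $\widetilde{\scl(Q)}$ and is a bijection onto it. For surjectivity, take an arbitrary element of $\widetilde{\scl(Q)}$, i.e. a nonempty intersection $\bigcap_{F\in \mathcal{A}} V(F)$ over a collection $\mathcal{A}$ of maximal cells of $Q$. I would use the combinatoriality of $Q$: the intersection $\bigcap_{F\in\mathcal{A}} F$ of these closed cells is again a closed cell $G$ of $Q$ (this is where property (2) of a combinatorial complex enters, applied iteratively — the intersection of two closed cells is a closed cell, hence so is any finite intersection), and I must check that $V(G)=\bigcap_{F\in\mathcal{A}}V(F)$. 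The inclusion $V(G)\subseteq\bigcap V(F)$ is immediate; for the reverse, if a vertex $v$ lies in every $F\in\mathcal{A}$ then $v$ lies in their intersection $G$, so $v\in V(G)$. Hence every element of $\widetilde{\scl(Q)}$ is $\Phi(G)$ for a genuine cell $G$. Conversely, the hypothesis that every cell of $Q$ is an intersection of maximal cells shows that $\Phi(F)\in\widetilde{\scl(Q)}$ for every $F\in S_Q$: writing $F=\bigcap_{F'\in\mathcal{A}}F'$ with each $F'$ maximal, the same vertex-set computation gives $V(F)=\bigcap_{F'\in\mathcal{A}}V(F')$, a nonempty intersection of maximal simplices.

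For injectivity, I would argue that a closed cell of a regular (indeed combinatorial) complex is determined by its vertex set: if $V(F)=V(F')$ then, by property (2), the cell $F\cap F'$ is a closed cell containing all of $V(F)=V(F')$; an easy induction on dimension (using that $\partial F$ is combinatorial and that a cell is the union of its boundary with an open cell) forces $F\cap F'=F=F'$. Finally, order-preservation in both directions: $F\subseteq F'$ in $S_Q$ clearly gives $V(F)\subseteq V(F')$; conversely, if $V(F)\subseteq V(F')$, then all vertices of $F$ lie in $F'$, so by combinatoriality $F\cap F'$ is a cell with vertex set $V(F)$, which by injectivity equals $F$, whence $F\subseteq F'$. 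This shows $\Phi$ is an isomorphism of posets, which is the claim.

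The main obstacle I anticipate is the purely cell-complex-theoretic lemma that a closed cell of a combinatorial complex is recovered from its vertex set, and more generally that $V(F\cap F')=V(F)\cap V(F')$ for closed cells — these are intuitively clear but require a careful induction on skeleta using regularity and the ``intersection is a cell'' axiom, exactly the kind of standard induction already invoked in the excerpt to prove $Q\cong|S_Q|$. Everything else is bookkeeping with the Galois connection $(F,G)$ of Construction~\ref{conMainConstruction}: in fact one can phrase the whole proof by noting that $\Phi$ intertwines the closure operator $\cl$ on $\scl(Q)$ with the ``intersection of all maximal cells containing it'' operator on $S_Q$, so that $\widetilde{\scl(Q)}$, being the image of $\cl$, corresponds precisely to $S_Q$ under the hypothesis that every cell is such an intersection.
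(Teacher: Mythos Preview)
Your proposal is correct and follows essentially the same route the paper has in mind: the paper's proof consists of the single sentence ``straightforward from Construction~\ref{conMainConstruction} and Definition~\ref{defSimpClosure}'', and your map $\Phi\colon F\mapsto V(F)$ together with the verification that maximal simplices of $\scl(Q)$ are exactly the $V(F)$ for maximal $F$ is precisely the unpacking of that sentence. You are right to flag the lemma that a closed cell of a combinatorial complex is determined by its vertex set as the one nontrivial ingredient---the paper sweeps this under the rug, but your inductive sketch (reduce to a proper face $G\subsetneq F$ with $V(G)=V(F)$, pick another facet $G'$ of $F$, and apply the intersection axiom plus induction to $G\cap G'\subsetneq G'$) is the standard way to establish it.
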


The proof is straightforward from Construction~\ref{conMainConstruction} of the weeding and Definition~\ref{defSimpClosure}. The assumption in Proposition~\ref{propReconCombCellCpx} holds for manifolds as the following lemma shows.

\begin{lem}
If $Q$ is a combinatorial cell subdivision of a closed manifold, then every cell of $Q$ is the intersection of some collection of maximal cells.
\end{lem}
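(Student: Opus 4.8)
The plan is to prove the equivalent assertion that for every cell $F$ of $Q$ one has
$$\bar F=\bigcap\{\bar E\mid E\text{ a maximal cell},\ E\supseteq F\}.$$
This is equivalent to the statement of the lemma: if $\bar F=\bigcap_{E\in\mathcal A}\bar E$ for some family $\mathcal A$ of maximal cells, then $F\subseteq\bar E$ for each $E\in\mathcal A$, so $\mathcal A$ consists of maximal cells containing $F$ and therefore the intersection over \emph{all} maximal cells containing $F$ is contained in $\bar F$; the reverse inclusion is automatic. Two preliminary points: first, since $Q$ is a closed $n$-manifold ($n=\dim Q$) and every cell is a face of a maximal cell, all maximal cells have dimension $n$ — if a maximal cell $E$ had $\dim E<n$, then ($E$ being maximal) the open star in $Q$ of a point of $\relint E$ would be $\relint E$ itself, hence open in $Q$, forcing $\relint E$ to be an $n$-manifold, a contradiction; second, by iterating the defining property of a combinatorial complex, the intersection of any finite nonempty family of closed cells is again a single closed cell. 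Now fix $F$ and put $\bar G:=\bigcap\{\bar E\mid E\text{ maximal},\ E\supseteq F\}$; by the second point this is a single closed cell with $F$ a face of $G$. Suppose, for contradiction, that $F$ is a \emph{proper} face of $G$. Each maximal cell occurring in the intersection contains $\bar G$, and conversely every maximal cell containing $G$ contains $F$, so $\mathcal M:=\{E\text{ maximal}\mid E\supseteq F\}=\{E\text{ maximal}\mid E\supseteq G\}$.

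Next I pass to the link $L:=\lk_Q F$. Because $Q$ is a combinatorial cell subdivision of a closed $n$-manifold, $L$ carries a combinatorial cell structure whose cells correspond to the cells of $Q$ properly containing $F$ (with dimension lowered by $\dim F+1$); in particular its top-dimensional cells correspond exactly to the cells of $\mathcal M$, and $|L|$ is a homology $(n-\dim F-1)$-sphere. Within the present framework this homology computation can be extracted from $Q\cong|S_Q|$ together with formula~\eqref{eqLinkInPoset}: the link of the vertex $F$ in the triangulation $K(S_Q)$ of $Q$ is $K((S_Q)_{<F})\ast K((S_Q)_{>F})\cong S^{\dim F-1}\ast|(S_Q)_{>F}|$, and since this is the link of a vertex in a triangulation of the $n$-manifold $Q$ it is a homology $(n-1)$-sphere; joining with $S^{\dim F-1}$ merely shifts degree, so $|(S_Q)_{>F}|=|L|$ is a homology $(n-\dim F-1)$-sphere, and note $n-\dim F-1\ge 0$. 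Under this correspondence $G$ gives a cell $\sigma$ of $L$ of dimension $\dim G-\dim F-1\ge 0$, and since every cell of $\mathcal M$ contains $G$, the cell $\sigma$ is a face of \emph{every} top-dimensional cell of $L$.

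This cannot happen. If $n-\dim F-1\ge 1$, then $L$ is connected; every cell of $L$ is a face of some top cell and every top cell contains $\sigma$, so $L=\overline{\st}_L(\sigma)$; but the closed star of a cell is contractible (it deformation retracts onto the closure of that cell), so $L$ would be contractible, contradicting $H_{n-\dim F-1}(L;\Zo_2)\ne 0$. If $n-\dim F-1=0$, then $L$ is a two-point space, and $\sigma$, being a face of both of its $0$-cells, would have to equal both of them — absurd. Hence $F$ is not a proper face of $G$, i.e. $G=F$, which proves the lemma.

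The main obstacle is the detailed justification of the properties of $L$ used above: that $\lk_Q F$, with cells indexed by the proper cofaces of $F$, is genuinely a combinatorial cell complex, that it realizes a homology $(n-\dim F-1)$-sphere, and that its top cells are precisely $\mathcal M$ with the expected incidences. These are classical facts about regular cell subdivisions of closed manifolds, but making them self-contained — most naturally by deriving them from $Q\cong|S_Q|$ and the link and cone formulas \eqref{eqCones}, \eqref{eqLinkInPoset} of Section~\ref{secPrelim} — is where the real work lies; once they are in place the homological contradiction is immediate. A parallel route avoids $L$ by passing to the dual block decomposition $Q^\ast$ of $Q$ — again a combinatorial cell complex, with face poset $(S_Q)^{\op}$ — and uses the elementary fact that in any combinatorial cell complex a proper face of a cell has strictly fewer vertices than the cell: there $D(G)$ would be a proper face of $D(F)$ sharing its entire vertex set $\{D(E)\mid E\in\mathcal M\}$, a contradiction.
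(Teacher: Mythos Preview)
Your argument is correct in substance, but it takes a considerably longer route than the paper. The paper's proof is three lines: pass to the Poincar\'e dual decomposition $Q^*$, observe that the vertices of the dual cell $F^*$ are exactly the maximal cells of $Q$ containing $F$, and conclude that $F$ is the intersection of those maximal cells. You mention this as a ``parallel route'' in your final sentence, but it is in fact the paper's entire argument. Your main line instead works locally: you show that $|(S_Q)_{>F}|$ is a homology $(n-\dim F-1)$-sphere via the link formula~\eqref{eqLinkInPoset}, and then derive a contradiction by showing that if $F\lneq G$ this same space would be contractible. Both approaches ultimately rest on the same manifold duality --- your homology-sphere statement for upper intervals is the local form of what makes $Q^*$ a cell complex --- but the paper packages it globally and skips the contradiction step.

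Two small points about your write-up. First, the justification ``the closed star of a cell is contractible (it deformation retracts onto the closure of that cell)'' is not quite the right mechanism, and it is the one place where you lean on $L$ being a combinatorial cell complex in its own right. You can avoid this entirely: work directly in the poset $P=(S_Q)_{>F}$ and use the combinatorial property of $Q$ (not of $L$) to define the join $\tau\mapsto\tau\vee G$ for every $\tau\in P$ --- the join exists because $\tau$ and $G$ lie in a common maximal cell. This map is monotone, satisfies $\tau\leqslant\tau\vee G$, and has image $P_{\geqslant G}$, which has least element $G$; hence $|P|\simeq|P_{\geqslant G}|\simeq\pt$ by the standard order-homotopy argument. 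This dissolves the ``main obstacle'' you flag in your last paragraph: you never need $L$ itself to be a combinatorial cell complex. Second, in the $n-\dim F-1=0$ case your phrase ``two-point space'' is literally correct, but only because $\dim F=n-1$ forces every element of $(S_Q)_{>F}$ to be a top cell; it is worth saying this rather than appealing to the homology alone.
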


\begin{proof}
Consider the Poincare dual cell subdivision $Q^*$. Let $F^*$ be the face dual to $F$ and let $v_1,\ldots,v_s$ be all vertices of $F^*$. Then $F$ is the intersection of the maximal cells of $Q$ dual to $v_1,\ldots,v_s$.
%
%
\end{proof}

\begin{rem}\label{remBipartiteTables}
Propositions~\ref{propReplaceBySimplex} and~\ref{propReconCombCellCpx}, despite their simplicity, show an important fact: to reconstruct both homotopy type and combinatorics of certain combinatorial cell complexes, one only needs to know vertex lists of all maximal cells of $Q$. In this case, the simplicial complex $\scl(Q)$ can be reconstructed by adding all subsets of the maximal simplices, and the poset $S_Q$ can be reconstructed by considering all possible intersections of maximal simplices. Note that the list of maximal simplices can be encoded in a bipartite graph $(V\sqcup W,E)$ where $V$ is the set of vertices of $Q$, $W$ is the set of maximal cells of $Q$, and there is an edge $e=(v,w)$ between $v\in V$ and $w\in W$ if and only if $v\in w$. The graph can be encoded in a bit matrix $A$ of size $|V|\times|W|$, where $a_{vw}=1$ if $v\in w$ and $0$ otherwise. This way of representation of simplicial complexes gives certain benefits in persistent homology computations: see~\cite{BPP} and Remark~\ref{remSimplification} below.
\end{rem}

\subsection{Duality on general nerve complexes}

We start with a topologically motivated example generalizing the duality on polytopes.

\begin{ex}\label{exPoincareDual}
Let $Q$ be a combinatorial cell decomposition of a closed manifold $M$, $\dim M=n-1$, and $Q^*$ be the Poincare dual cell decomposition of the same manifold. Then the face poset $S_{Q^*}$ is isomorphic to $S_Q^{\op}$. The simplicial complex $\scl(Q)$ coincides with the nerve of the covering of $M$ by the maximal cells of $Q^*$. Similarly, $\scl(Q^*)$ is the nerve of the covering of $M$ by the maximal cells of $Q$, or, equivalently, the nerve of the covering of $M$ by the maximal simplices of $\scl(Q)$.

Let $L$ denote the nerve of the covering of $K=\scl(Q)$ by its maximal simplices. Then, according to Poincare duality, the poset $\widetilde{L}\cong S_{Q^*}$ is isomorphic to the poset $\widetilde{K}^{\op}\cong S_{Q}^{\op}$.
\end{ex}

The phenomenon described in this example is not just an instance of Poincare duality, but can be stated in much bigger generality. For completeness, we give the statements and proofs in combinatorial and topological manner, although, the ideas behind them are well developed in the area of formal concept analysis: the overview of this theory is given in Section~\ref{secFCA}.

\begin{thm}\label{thmDualityOfNerves}
Let $K$ be a simplicial complex and $L$ be the nerve of the covering of $K$ by its maximal simplices. Then the weeding poset $\widetilde{L}$ is isomorphic to the poset $(\widetilde{K})^{\op}$.
\end{thm}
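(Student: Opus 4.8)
The plan is to unwind both weeding posets explicitly in terms of the combinatorial data of $K$ and then exhibit an order-reversing bijection between them. Write $V=[m]$ for the vertex set of $K$ and $W=\{F_1,\ldots,F_n\}$ for the set of maximal simplices of $K$. By Construction~\ref{conMainConstruction}, the elements of $\widetilde{K}$ are exactly the nonempty simplices of the form $\bigcap_{j\in A}F_j$ for $A\subseteq[n]$, ordered by inclusion. On the other side, $L$ is the nerve of the covering $\{F_1,\ldots,F_n\}$, so its vertex set is $W$ and a subset $A\subseteq W$ is a simplex of $L$ iff $\bigcap_{j\in A}F_j\neq\varnothing$. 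Thus there is a natural surjection from the nonempty simplices of $L$ to the nonempty intersections appearing in $\widetilde{K}$, given by $A\mapsto\bigcap_{j\in A}F_j$; the first step is to make this precise and show it is exactly the closure map for the weeding of $L$.

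Next I would describe $\widetilde{L}$ concretely. Its maximal simplices are the maximal (by inclusion) faces of $L$, which correspond to the maximal subsets $A\subseteq W$ with $\bigcap_{j\in A}F_j\neq\varnothing$; dually to the construction of $\widetilde{K}$, these should be the sets $A_i=\{j : i\in F_j\}$ for $i\in V$ — i.e. for each vertex $i$ of $K$, the collection of maximal simplices containing it — after discarding those $A_i$ that are not maximal. Then, by the symmetric form of Construction~\ref{conMainConstruction} applied to $L$, the elements of $\widetilde{L}$ are the nonempty intersections $\bigcap_{i\in B}A_i=\{j\in[n] : B\subseteq F_j\}$ for $B\subseteq V$, ordered by inclusion of subsets of $W$. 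This is the key reformulation: $\widetilde{K}$ consists of the sets $\{i\in V : A\subseteq A_i^{\,\bullet}\}$ (intersections over subsets of $W$) and $\widetilde{L}$ consists of the sets $\{j\in W : B\subseteq F_j\}$ (intersections over subsets of $V$).

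With this in hand, the core of the argument is the classical Galois correspondence between the boolean lattices $2^V$ and $2^W$ induced by the incidence relation $\ca{I}=\{(i,j) : i\in F_j\}$: send $B\subseteq V$ to $B'=\{j\in W : B\subseteq F_j\}$ and $A\subseteq W$ to $A'=\{i\in V : i\in F_j\ \forall j\in A\}=\bigcap_{j\in A}F_j$. Standard properties of such a Galois connection give that $B\mapsto B'$ and $A\mapsto A'$ restrict to mutually inverse, order-reversing bijections between the sets of "closed" subsets $\{A'' = A'\}$ on each side. One then identifies these two families of closed sets with $\widetilde{K}$ and $\widetilde{L}$ respectively (a closed subset of $V$ is precisely a nonempty intersection of the $F_j$'s, hence an element of $\widetilde{K}$, and symmetrically on the $W$ side), and checks that the order on $\widetilde{L}$ used in Construction~\ref{conMainConstruction} — inclusion of simplices of $L$, i.e. inclusion of subsets of $W$ — is exactly the order on closed subsets of $W$. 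Since the bijection reverses inclusion, we get $\widetilde{L}\cong(\widetilde{K})^{\op}$. One boundary remark to dispatch: the empty intersection / empty simplex and the case where $K$ itself is a simplex (so $W$ is a single element) should be handled separately, but are immediate.

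The step I expect to be the main obstacle is the careful identification of the poset structure on $\widetilde{L}$: one must verify that "inclusion of faces in the nerve $L$" translates, under $A\mapsto\bigcap_{j\in A}F_j$, into the \emph{reverse} of inclusion in $\widetilde{K}$, and in particular that passing to closures on the $L$-side does not disturb this — i.e. that the closure operator for the weeding of $L$ corresponds exactly to the double-prime operator of the Galois connection. Keeping straight which of the two complexes contributes "vertices" and which contributes "maximal simplices" in each application of Construction~\ref{conMainConstruction} is where the bookkeeping is most error-prone, and is really the whole content of the theorem; the homotopy-theoretic input (Corollary~\ref{corGalois}) plays no role here, as this is a purely combinatorial isomorphism of posets.
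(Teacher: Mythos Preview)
Your proposal is correct and follows essentially the same route as the paper's proof: both set up the Galois connection between $2^V$ and $2^W$ induced by the incidence relation $i\in F_j$, identify the closed sets on the $V$-side with $\widetilde{K}$ and on the $W$-side with $\widetilde{L}$ (via the observation that the maximal simplices of $L$ are exactly the sets $\sigma_i=\{j:i\in F_j\}$), and read off the order-reversing bijection. The paper is slightly terser and does not separate out the boundary cases you flag, but the argument is the same.
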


\begin{proof}
Let $V$ and $W$ denote the vertex set and the set of maximal simplices of $K$ respectively. Therefore, $W$ is the vertex set of $L$. We denote the elements of $V$ by $i_1,i_2,\ldots$, and the elements of $W$ by $I_1,I_2,\ldots$. Consider the pair of morphisms $F\colon (2^V)^{\op}\rightleftarrows 2^W\colon G$, where
\begin{itemize}
  \item $2^X$ is the poset of all subsets of $X$ ordered by inclusion; $(2^X)^{\op}$ is the same set with the reversed order.
  \item $F(\{i_1,\ldots,i_k\})=\{I\in W\mid \{i_1,\ldots,i_k\}\subseteq I\}$;
  \item $G(\{I_1,\ldots,I_s\})=I_1\cap\cdots\cap I_s\subseteq V$ and $G(\varnothing)=V$.
\end{itemize}
It is easily checked that
\begin{equation}\label{eqGaloisProof1}
G(F(A))\leqslant A \mbox{ for any } A\in (2^V)^{\op} \mbox{ and}
\end{equation}
\begin{equation}\label{eqGaloisProof2}
F(G(B))\geqslant B \mbox{ for any } B\in 2^W
\end{equation}
which means that $F$ and $G$ form a Galois connection. Applying the morphism $F$ to inequality~\eqref{eqGaloisProof1} and substituting $B=F(A)$ into inequality~\eqref{eqGaloisProof2}, we get $F(A)\leqslant F(G(F(A)))\leqslant F(A)$, hence $F(G(F(A)))=F(A)$. Similarly, we have $G(F(G(B)))=G(B)$. This means that $F$ and $G$ are bijections between the images $F((2^V)^{\op})$ and $G(2^W)$.

The image $G(2^W)\subseteq (2^V)^{\op}$ coincides with $\widetilde{K}^{\op}\sqcup \{\varnothing, V\}$ according to the definition of the weeding.

Each vertex $i\in V$ determines the simplex $\sigma_i=\{I\subseteq W\mid I\ni i\}$ of $L$, and each maximal simplex of $L$ necessarily has the form $\sigma_i$ for some $i\in V$. It follows that elements of $\widetilde{L}$ have the form
\[
\sigma_{i_1}\cap\cdots\cap\sigma_{i_k}=\{I\subseteq W\mid \{i_1,\ldots,i_k\}\subseteq I\}=F(\{i_1,\ldots,i_k\}).
\]
Therefore, $F((2^V)^{\op})=\widetilde{L}\sqcup\{\varnothing, W\}$. Finally, we see that the maps $F$ and $G$ provide monotonic bijections between the posets $(\widetilde{K})^{\op}$ and $\widetilde{L}$.
\end{proof}

\begin{rem}\label{remSimplification}
Passing from a simplicial complex to its cover by maximal simplices is a natural thing to do if the complex is represented by a bipartite graph or a rectangular bit matrix, as in Remark~\ref{remBipartiteTables}. Roughly speaking, this operation interchanges two parts of a bipartite graph and transposes a matrix. There is, however, one detail missing. As was mentioned in the proof of Theorem~\ref{thmDualityOfNerves}, each vertex $i$ of $K$ determines the simplex $\sigma_i$ of $L$, and each maximal simplex of $L$ is determined this way. However, it may happen that $\sigma_{i_1}\subseteq \sigma_{i_2}$ for some $i_1\neq i_2$. Of course, the number of maximal simplices of $L$ in this case is strictly smaller than the number of vertices of $K$: one can remove the redundant simplex $\sigma_{i_1}$ from the list of maximal simplices.

This observation was used in~\cite{BPP} to obtain a simplification algorithm for the computation of persistent homology of a filtered simplicial complex. The algorithm (if we state it for one complex of the filtration) iterates the procedure of passing to the nerve of the covering by maximal simplices, and removes redundant simplices at each stage, until the procedure stabilizes. On a matrix level, this corresponds to transposing, and deleting a row if it is componentwise majorized by another row. Despite its seeming simplicity, this algorithm is claimed to speed up homology computations drastically.

It should be noticed however, that according to Theorem~\ref{thmDualityOfNerves}, the weeding poset $\widetilde{K}$ of $K$ does not change, up to order reversal, at each stage of the algorithm. Therefore, this poset gives an obstruction to the applicability of the algorithm. For example, if we take a combinatorial cell decomposition $Q$ of a closed manifold $M$, as in Example~\ref{exPoincareDual}, and consider the simplicial complex $\scl(Q)$, then the nerve of the maximal simplex cover of $\scl(Q)$ will be $\scl(Q^*)$. Iteration of this procedure, will successively switch between $\scl(Q)$ and $\scl(Q^*)$, hence no simplification occurs in this case.
\end{rem}

\section{Formal concept analysis and covers}\label{secFCA}
\subsection{Formal concepts}

Formal concept analysis (FCA) emerged in the work~\cite{Wil} and recently constitutes a vast research area in data mining. The results of FCA have found many applications in different areas. We refer to the book \cite{GaOb} for the modern exposition of the subject with the focus on algorithms. FCA methods are implemented in several computer programs, such as Concept Explorer (see the list of other tools in \cite[p.30]{GaOb}). This section aims to show how the notions of FCA are related to computational topology.

\begin{defin}\label{defFormalContext}
A \emph{formal context} is a triple $\Ca=(V,W,\I)$, where $V$ and $W$ are sets, and $\I\subseteq V\times W$ is a binary relation. The elements of $V$ are called objects, the elements of $W$ are called attributes, and we write $v\I w$ for the boolean expression $(v,w)\in \I$.
\end{defin}

Whenever $v\I w$ for $v\in V$, $w\in W$ (that is $(v,w)\in \I$), this is treated as ``the object $v$ has the attribute $w$''. Therefore, a formal context is nothing but a bipartite graph $\Gamma_\Ca$ on the vertex set $V\sqcup W$, or, equivalently, a bit matrix $M_\Ca$ of size $|V|\times|W|$. For a subset $A\subseteq V$ one can define a subset $A'\subseteq W$:
\[
A'=\{w\in W\mid\forall v\in A\colon v\I w\},
\]
the set of all common attributes of elements of $A$. Similarly, for any $B\subseteq W$, there is a subset $B'\subseteq V$:
\[
B'=\{v\in V\mid\forall w\in B\colon v\I w\},
\]
the set of all objects that having all attributes from the set $B$. This defines the pair of monotone (decreasing) morphisms
\[
F\colon V\rightleftarrows W\colon G, \quad F(A)=A',\quad G(B)=B',
\]
which is a Galois connection (Definition~\ref{defGaloisCon} should be restated for decreasing monotone functions by reversing the order in the left set).

\begin{defin}\label{definFormalConcept}
Let $A\subseteq V$, $B\subseteq W$. A pair $(A,B)$ is called a \emph{formal concept} if $A'=B$ and $B'=A$. The set $A$ is called the \emph{extent} of the formal concept $(A,B)$ and $B$ is called its \emph{intent}.
\end{defin}

One can think of a formal concept as a maximal by inclusion bi-clique (complete bipartite subgraph) in the graph $\Gamma_\Ca$, or as a maximal by inclusion rectangular submatrix of $M_\Gamma$ filled with $1$'s. In a concept, the extent and intent determine one another according to the definition.

\begin{con}
Formal concepts are partially ordered: $(A_1,B_1)\leqslant (A_2,B_2)$ if $A_1\subseteq A_2$ (or, equivalently, $B_1\supseteq B_2$). The poset of formal concepts in a given context $\Ca$ is denoted by $\mathfrak{B}(\Ca)=\mathfrak{B}(V,W,\I)$.

This has a natural logical and philosophical meaning: the smaller is the set of objects, the bigger is the set of their defining attributes. As an example, we consider the notion of ``a graph'' smaller than the notion of ``a cell complex''. On one hand, every graph is a cell complex: hence there are more cell complexes than graphs. On the other hand, a graph has more attributes than a cell complex: it has all attributes of a cell complex but, in addition, it is 1-dimensional.

In the theory of FCA, it is proved that the poset $\mathfrak{B}(\Ca)$ is a complete lattice, which means that any collection of concepts has the greatest common subconcept (the meet in a poset) and the least common superconcept (the join in a poset). Moreover, under certain conditions, a complete lattice can be represented as $\mathfrak{B}(\Ca)$ for some formal context (see The basic theorem of concept lattices, \cite{Wil}).
\end{con}

The discussion of the previous sections becomes more transparent in terms of FCA due to the following construction.

\begin{con}\label{conContextFromComplex}
Let $K$ be a simplicial complex. Consider the formal context $\Ca_K=(V,W,\I)$, where $V$ and $W$ are the vertex set and the set of maximal simplices of $K$ respectively, and $\I$ is the relation of inclusion: $(v,w)\in\I$ if and only if $v$ is a vertex of $w$. Then, by definition, the context lattice $\mathfrak{B}(\Ca_K)$ coincides with $\widetilde{K}\sqcup\{\hat{0},\hat{1}\}$. Here $\hat{0}=(\varnothing, W)$ is the least concept, and $\hat{1}=(V,\varnothing)$ is the greatest concept (recall that we did not consider them as the elements of the weeding $\tilde{K}$ according to Construction~\ref{conMainConstruction}).
\end{con}

\begin{rem}
On this way of thinking, the duality constructed in Theorem~\ref{thmDualityOfNerves} is a trivial statement. Indeed, there is a natural duality of formal contexts in which the roles of objects and attributes are interchanged. The formal concepts are defined in a way completely symmetric with respect to objects and attributes. The concept lattice remains the same under such interchange, only the order changes to its opposite. We emphasize the fact that duality between objects and attributes can be considered as a far going generalization of the Poincare duality.
\end{rem}

\begin{con}\label{conComplexFromContext}
It is possible to translate between FCA and topology in the opposite direction. If $\Ca=(V,W,\I)$ is a formal context, we can consider two simplicial complexes as follows. (1) $K_\Ca$ is the complex on the vertex set $V$ having simplices of the form $\{w\}'=\{v\in V\mid v\I w\}$ for $w\in W$ and all their subsets. In other words, $I\in K_\Ca$ if and only if $I'\neq\varnothing$. (2) $L_\Ca$ is the complex on the vertex set $W$ with simplices $\{w\in W\mid v\I w\}$ for $v\in V$ and their subsets.
\end{con}

Let $\Ca=(V,W,\I)$ be a formal context. Then there is a Galois connection
\begin{equation}\label{eqGaloisConcLatticeFull}
\cl\colon 2^V\rightleftarrows \mathfrak{B}(\Ca)\colon \inc,
\end{equation}
where $\cl(A)=(A'',A')$ and $\inc(A,B)=A$. Then Corollary~\ref{corGalois} of the Quillen--McCord theorem A asserts the homotopy equivalence $|2^V|\simeq |\mathfrak{B}(\Ca)|$. This statement, however, does not make much sense in general: both posets have the least element $\hat{0}$ (as well as the greatest element $\hat{1}$), hence their geometrical realizations are contractible by Remark~\ref{remMinMaxCone}. However, removing $\hat{0}$ and $\hat{1}$ from all posets, we get the following

\begin{prop}\label{propContextComplexesHomot}
Let $\Ca$ be a context such that both complexes $K_{\Ca}$ and $L_{\Ca}$ are not full simplices and do not have ghost vertices. Then we have the homotopy equivalences
\[
|K_\Ca|\simeq |\mathfrak{B}(\Ca)\setminus\{\hat{0},\hat{1}\}|=|\mathfrak{B}(\Ca)^{\op}\setminus\{\hat{0},\hat{1}\}|\simeq |L_\Ca|
\]
\end{prop}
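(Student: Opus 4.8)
The plan is to realize both $|K_\Ca|$ and $|L_\Ca|$ as geometrical realizations of suitable subposets and then connect them to $|\mathfrak{B}(\Ca)\setminus\{\hat 0,\hat 1\}|$ by (co)restrictions of the Galois connection \eqref{eqGaloisConcLatticeFull}, so that Corollary~\ref{corGalois} (via Quillen--McCord) does all the work. First I would observe that the context $\Ca=(V,W,\I)$ canonically produces a simplicial complex $K_\Ca$ on $V$ whose maximal simplices are exactly the maximal sets among the $\{w\}'=\{v\mid v\I w\}$, $w\in W$; since $K_\Ca$ is not a full simplex and has no ghost vertices, these are genuine maximal simplices and $K_\Ca$ satisfies the hypotheses needed to speak of its weeding $\widetilde{K_\Ca}$. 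By Construction~\ref{conContextFromComplex} applied to $K_\Ca$ together with Construction~\ref{conMainConstruction}, $\widetilde{K_\Ca}$ consists of all nonempty intersections of maximal simplices of $K_\Ca$. The key point is that these intersections are precisely the extents $A''$ of the formal concepts of $\Ca$ other than $\hat 0=(\varnothing,W)$ and $\hat 1=(V,\varnothing)$: indeed, every extent is an intersection of object-extents $\{w\}'$, and conversely; hence the map $(A,B)\mapsto A$ is a poset isomorphism $\mathfrak{B}(\Ca)\setminus\{\hat 0,\hat 1\}\;\cong\;\widetilde{K_\Ca}$. By Construction~\ref{conMainConstruction} (the weeding preserves homotopy type), $|\widetilde{K_\Ca}|\simeq |K_\Ca|$, which gives the first homotopy equivalence $|K_\Ca|\simeq |\mathfrak{B}(\Ca)\setminus\{\hat 0,\hat 1\}|$.

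Next I would run the symmetric argument on the attribute side. The complex $L_\Ca$ on $W$ has maximal simplices the maximal sets among $\{v\}'=\{w\mid v\I w\}$, $v\in V$; again the hypotheses guarantee these are honest maximal simplices with no ghost vertices. Its weeding $\widetilde{L_\Ca}$ consists of all nonempty intersections of the $\{v\}'$, which are exactly the intents $B''$ of the concepts different from $\hat 0,\hat 1$. So $(A,B)\mapsto B$ gives a poset isomorphism $\mathfrak{B}(\Ca)^{\op}\setminus\{\hat 0,\hat 1\}\;\cong\;\widetilde{L_\Ca}$ (the order reverses because $(A_1,B_1)\leqslant (A_2,B_2)$ means $B_1\supseteq B_2$), and since reversing the order of a poset does not change the homotopy type of its geometrical realization ($K(S^{\op})=K(S)$, as chains are the same up to orientation), we get $|\mathfrak{B}(\Ca)\setminus\{\hat 0,\hat 1\}|=|\mathfrak{B}(\Ca)^{\op}\setminus\{\hat 0,\hat 1\}|$, and $|\widetilde{L_\Ca}|\simeq |L_\Ca|$ by the weeding again. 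Chaining the three equivalences yields the displayed formula.

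The main obstacle is the bookkeeping about the degenerate elements $\hat 0$ and $\hat 1$ and the hypothesis ``$K_\Ca,L_\Ca$ are not full simplices and have no ghost vertices''. One must check that, under these hypotheses, the object-extents $\{w\}'$ really do give the maximal simplices of $K_\Ca$ (no $\{w\}'$ equals all of $V$, which is where ``not a full simplex'' enters, and every vertex lies in some $\{w\}'$, which is ``no ghost vertices''), and similarly for $L_\Ca$; and that the least and greatest concepts $\hat 0=(\varnothing,W)$, $\hat 1=(V,\varnothing)$ are genuinely not among the weeding elements — exactly as recorded in Construction~\ref{conContextFromComplex}. Once the identification $\mathfrak{B}(\Ca)\setminus\{\hat 0,\hat 1\}\cong\widetilde{K_\Ca}$ and its attribute-side mirror are established, the topology is immediate from Construction~\ref{conMainConstruction}; alternatively, one could bypass the weeding and apply Corollary~\ref{corGalois} directly to the restricted Galois connections $\cl\colon K_\Ca\setminus\varnothing\rightleftarrows \mathfrak{B}(\Ca)\setminus\{\hat 0,\hat 1\}$ and its analogue for $L_\Ca$, but this amounts to the same verification.
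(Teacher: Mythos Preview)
Your main argument contains a genuine gap: the claimed poset isomorphism $\mathfrak{B}(\Ca)\setminus\{\hat 0,\hat 1\}\cong\widetilde{K_\Ca}$ is false for a general context $\Ca$, even under the stated hypotheses. The weeding $\widetilde{K_\Ca}$ is generated (under intersection) by the \emph{maximal} sets among the $\{w\}'$, whereas the extents of $\Ca$ are generated by \emph{all} of the $\{w\}'$; when some $\{w\}'$ is neither maximal nor an intersection of maximal ones, it is an extent that does not lie in $\widetilde{K_\Ca}$. Concretely, take $V=\{1,\dots,5\}$, $W=\{a,b,c,d\}$ with $\{a\}'=\{1,2,3,4\}$, $\{b\}'=\{2,3,4,5\}$, $\{c\}'=\{2,3\}$, $\{d\}'=\{1,5\}$: all four hypotheses of the proposition hold, yet $\{2,3\}=\{c\}'$ is an extent of a concept but is not an intersection of the maximal simplices $\{1,2,3,4\},\{2,3,4,5\},\{1,5\}$ of $K_\Ca$, so $\mathfrak{B}(\Ca)\setminus\{\hat 0,\hat 1\}$ has seven elements while $\widetilde{K_\Ca}$ has six. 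What you have really identified is $\widetilde{K_\Ca}$ with the concept lattice of $\Ca_{K_\Ca}$ (via Construction~\ref{conContextFromComplex}), not of $\Ca$ itself; the paper explicitly warns that $\Ca_{K_\Ca}$ need not coincide with $\Ca$.

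Your ``alternative'' at the end---restricting the Galois connection~\eqref{eqGaloisConcLatticeFull} to $\cl\colon K_\Ca\setminus\{\varnothing\}\rightleftarrows \mathfrak{B}(\Ca)\setminus\{\hat 0,\hat 1\}\colon\inc$ and applying Corollary~\ref{corGalois}---is correct and is exactly the paper's approach. The hypotheses are used precisely to make this restriction well-defined: for nonempty $A\in K_\Ca$ one has $A'\neq\varnothing$, so $\cl(A)\neq\hat 1=(V,\varnothing)$ (here ``$K_\Ca$ not a full simplex'' guarantees $V'=\varnothing$, i.e.\ that $\hat 1$ really has empty intent), and $A''\supseteq A\neq\varnothing$ gives $\cl(A)\neq\hat 0$; conversely every extent $A$ of a concept other than $\hat 0,\hat 1$ has $A'\neq\varnothing$, hence $A\in K_\Ca\setminus\{\varnothing\}$. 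The $L_\Ca$ side is symmetric. So the proof you relegated to a parenthetical is the one that actually works, and it does not ``amount to the same verification'' as the weeding route.
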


\begin{proof}
The assumption tells that whenever the subset $A\subset V$ satisfies $A\neq\varnothing$ and $A\neq V$, we have $\cl(A)\neq \hat{0}=(\varnothing, W)$ and $\cl(A)\neq\hat{1}=(V,\varnothing)$, where the closure operator $\cl$ is defined in \eqref{eqGaloisConcLatticeFull}. Therefore we have a Galois connection
\begin{equation}\label{eqGaloisConcLatticeCut}
\cl\colon 2^V\setminus\{\varnothing,V\}\rightleftarrows \mathfrak{B}(\Ca)\setminus\{\hat{0},\hat{1}\}\colon \inc,
\end{equation}
proving the homotopy equivalence $|K_\Ca|\simeq |\mathfrak{B}(\Ca)\setminus\{\hat{0},\hat{1}\}|$. The second homotopy equivalence is proved similarly.
\end{proof}

\begin{rem}
The assumption of the Proposition~\ref{propContextComplexesHomot} means that the bit matrix of the formal context $\Ca$ does not have rows and columns consisting entirely of $0$'s or $1$'s.
\end{rem}

Proposition~\ref{propContextComplexesHomot} shows, that the whole subject of formal concept analysis is consistent with homotopy theory in some sense.

\begin{rem}
Note that Constructions~\ref{conContextFromComplex} and~\ref{conComplexFromContext} are not transverse to each other. When we pass from a formal context to a simplicial complex, in some sense, we forget the information about non-maximal simplices. Adding these simplices to the list does not affect simplicial complex, however, this may affect the concept lattice. In general, we have $K_{\Ca_K}=K$, but $\Ca_{K_{\Ca}}$ may not coincide with $\Ca$.
\end{rem}


\subsection{Nerves and formal contexts}
We give an example of a formal context which naturally arises in topology.


\begin{con}\label{conContextFromCovering}
Let $\ca{U}=\{U_i\}_{i\in[m]}$ be a covering of a topological space $X$, that is $X=\bigcup_{i\in[m]}U_i$. Consider the formal context $\Ca_{\ca{U}}=([m],X,\I)$, where $(i,x)\in \I$ if and only if $x\in U_i$.
\end{con}

\begin{rem}
Since $X$ is usually infinite, it is impossible to consider $L_{\Ca_{\ca{U}}}$ as a finite simplicial complex. However, it is possible to consider it as an infinite simplicial set as follows. With any set $M$, finite or infinite, one can associate ``the infinite dimensional simplex''
\[
\Delta_M=\colim\limits_{F\subseteq M, |F|<\infty} \Delta_F,
\]
with the topology of direct limit. Here $\Delta_F$ is the simplex on the vertex set $F$. It can be seen (e.g. by Whitehead theorem) that $\Delta_M$ is contractible. In the same manner, we can extend the definition of $L_\Ca$ for a context $\Ca=(V,W,\I)$ with infinite $W$ by setting
\begin{equation}\label{eqInfiniteNerve}
L_\Ca=\colim\limits_{\substack{F\subseteq W, |F|<\infty\\ F'\neq\varnothing}}\Delta_F.
\end{equation}
\end{rem}

It follows from the definition, that $K_{\Ca_{\ca{U}}}$ coincides with the nerve of the covering $\ca{U}$. The concept lattice $\mathfrak{B}(\Ca_{\ca{U}})$ contains the same homotopy information as $K_{\Ca_{\ca{U}}}$, however, it contains more combinatorial information as the following example shows.

\begin{ex}\label{exCoversLattices}
Consider three coverings shown on Fig.~\ref{figNerveConcepts}. In all three cases, the triple intersections are nonempty, therefore, the nerve of the covering is a triangle in all three cases. However, the concept lattices $\mathfrak{B}(\Ca_{\ca{U}})$ are different. We label each formal concept on the Hasse diagram by a its extent, i.e. by an element of $2^{[3]}$. The intents may be infinite sets that can be restored from extents. So we do not write intents in the labels.

For example, consider the figure in the middle. The element $13$ is not present on the Hasse diagram since
\[
\{1,3\}''=(U_1\cap U_3)'=\{i\in[3]\mid U_i\supseteq U_1\cap U_2\}=\{1,2,3\}\neq\{1,3\},
\]
and, therefore, $\{1,3\}$ is not an extent of any formal concept. Similarly, on the right figure, the set $U_2$ can be represented as the intersection of a larger collection of covering sets ($U_2=U_2\cap U_3$), hence $\{2\}$ does not belong to the concept lattice.
\end{ex}

\begin{figure}[h]
\begin{center}
\includegraphics[scale=0.3]{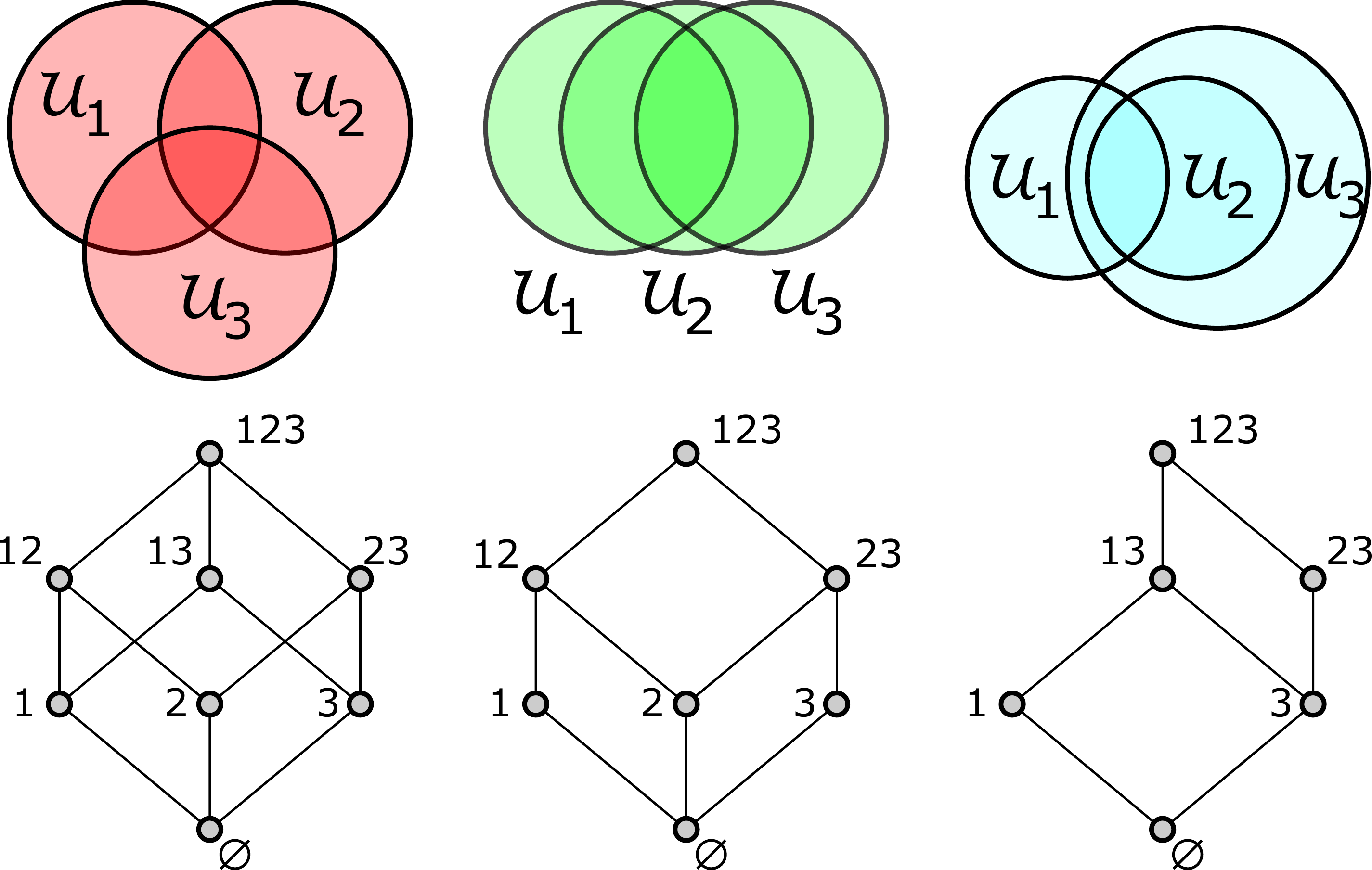}
\end{center}
\caption{Examples of coverings and their concept lattices}\label{figNerveConcepts}
\end{figure}

\section{Nerves, contexts, and brain studies}\label{secMouse}

The Nerve theorem and the related homotopy theory had found applications in brain study, more precisely in the study of neural activity of place cells. Originally, this topological approach in the study of place cells activity was proposed by Dabaghian, Cohn, and Frank~\cite{Dabag,DabagNoCov}; more recently this theory was further developed by Curto and Itskov~\cite{CurIts,CurItsNeuralRing}. There are other works, which are not related directly to place cells, however use similar techniques in similar tasks. We do not give a full list of relevant references, however, we mention the works of Ghrist~\cite{Ghr,Ghr2} on sensor networks (these are related to the problem of simplification of nerves of the coverings), the work of Remolina and Kuiper~\cite{RemKui} (this work describes the representation of the geometry of physical space in artificial networks), and the works~\cite{EAGN,EFP} (in which FCA in applied in the analysis of neural codes obtained by fMRI).

\begin{con}\label{conMouse}
The nerves of the coverings appear in brain study in the following way (we refer to Manin's expository work~\cite{Manin} for mathematical details and to~\cite{CurIts} for an exhaustive list of neurobiological references).

In the hyppocampus of a mammal, there exist special neural cells, called \emph{place cells}, first found by O'Keefe and Dostrovsky~\cite{Keefe}. Assume that some cell $i$ activates whenever the mammal is located in a specific region $U_i$ of the physical space $X$. In this case, $i$ is called a \emph{place cell}, and the subset $U_i\subset X$ is called its \emph{place field} (it should be mentioned, that more specific and precise definition is used by neurobiologists). In a typical experiment, a mouse is allowed to move freely inside the labyrinth of shape $X\subset\Ro^2$, while the activity of some part of its brain is recorded. Physically, place cells look the same as the other cells in the surrounding part of the hyppocampus: they can be distinguished only according to their functionality. Experiments show that there exist sufficiently many place cells, so the assumption that place fields $U_i$ cover $X$ is plausible.

The general task is to reconstruct the shape of the labyrinth $X$ from the data of neural activity, provided that the data are complete (it is assumed that the mouse had visited ``all points of the labyrinth'' sufficiently many times, and remembers the surrounding environment). We will assume that all place fields $U_1,\ldots,U_m$ are convex. This is not always the case, however, we may pick up only those $i$ for which $U_i$ is convex: there are still quite many of them. Then, theoretically, the task is easily solved by the Nerve theorem. Indeed, we initialize a simplicial complex $K$ on the vertex set $[m]$. Whenever we see that some set of neurons $\{i_1,\ldots,i_s\}$ is active simultneously, we conclude that the sets $U_{i_1},\ldots,U_{i_s}$ intersect, so we add the simplex $\{i_1,\ldots,i_s\}$ (and all its subsets) to $K$. We end up with the nerve $K$ of the covering $\{U_i\}$, which is homotopy equivalent to $X$.

So, in some sense, the physical space $X$ is encoded in the brain by means of the Nerve theorem. This discourse can be enlarged to a wider, though more theoretical extent as proposed by Manin~\cite{Manin}. Suppose that there exists a stimuli space $X$: some topological space, encoding all possible external events (stimuli) that may happen to an animal. Let us assume that a neuron $i$ activates when a stimulus from a subset $U_i\subset X$ acts on an animal (so that $i$-th neuron acts as an indicator function of $U_i$). Then, an animal has a nerve of the covering $\{U_i\}$ encoded in its brain. If we are lucky (for example, if the covering of $X$ by $U_i$'s is contractible), then the encoded picture is homotopy equivalent to the stimuli space $X$: we may conclude that an animal perceives reality in a homotopically correct way.
\end{con}

A related construction is also used in the analysis of place cells' activity patterns.

\begin{con}
In Construction~\ref{conMouse}, we only use the information on which sets of neurons activate simultaneously, however, we forget the information, which collections are simultaneously inactive at the same time. As was mentioned in Example~\ref{exCoversLattices}, the three coverings shown on Fig.~\ref{figNerveConcepts} all have the same nerves, however the combinatorics of coverings is different. In~\cite{CurItsNeuralRing} the neural codes were studied in a way which incorporates the information of inactivity. In general, given a covering $\{U_i\}_{i\in[m]}$ of $X$, we can define \emph{the neural code} as the subset of $2^{[m]}\cong\{0,1\}^m$ consisting of the bit strings
\[
\ca{N}_{\ca{U}}=\{(x\in U_1, x\in U_2,\ldots,x\in U_n)\in\{0,1\}^n\mid x\in X\}.
\]
In other words, $\ca{N}_{\ca{U}}=\{\{x\}'\mid x\in X\}$, if we use the language of the formal context $\Ca_{\ca{U}}$ defined in Construction~\ref{conContextFromCovering}.
\end{con}

The neural code contains a complete combinatorial information about the covering. In particular, the lower order ideal, generated by $\ca{N}_{\ca{U}}$ in the boolean lattice $2^{[m]}$, coincides with the nerve $K_{\ca{U}}$. The context lattice introduced in Construction~\ref{conContextFromCovering} can be recovered as well.

\begin{prop}\label{propCodeDeterminesConcepts}
The context lattice $\mathfrak{B}(\Ca_{\ca{U}})$ can be reconstructed from the neural code $\ca{N}_{\ca{U}}$ by the rule:
\[
(A,A')\in \mathfrak{B}(\Ca_{\ca{U}}) \Leftrightarrow \forall i\notin A \exists \tilde{A}\in \ca{N}_{\ca{U}} : A\subseteq\tilde{A} \mbox{ and } i\notin \tilde{A}.
\]
\end{prop}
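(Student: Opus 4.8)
The plan is to unwind both sides of the claimed equivalence and reduce it to the elementary property $A\subseteq A''$ of the derivation operators. Recall that in the context $\Ca_{\ca U}=([m],X,\I)$ one has, for $A\subseteq[m]$, the set $A'=\bigcap_{j\in A}U_j\subseteq X$, and for $B\subseteq X$ the set $B'=\{i\in[m]\mid B\subseteq U_i\}$; hence
\[
A''=\{i\in[m]\mid A'\subseteq U_i\}=\Bigl\{i\in[m]\;\Big|\;\bigcap\nolimits_{j\in A}U_j\subseteq U_i\Bigr\}.
\]
Since $(-)'$ on $2^{[m]}$ and $(-)'$ on $2^X$ form a Galois connection (Section~\ref{secFCA}), it is a general property that $A\subseteq A''$; therefore $(A,A')$ is a formal concept, i.e. $A''=A$, if and only if $A''\subseteq A$, that is, if and only if no index $i\notin A$ satisfies $\bigcap_{j\in A}U_j\subseteq U_i$.

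First I would rewrite this last condition in pointwise contrapositive form: $(A,A')$ is a concept iff for every $i\notin A$ the inclusion $\bigcap_{j\in A}U_j\subseteq U_i$ fails, i.e. there is a point $x\in X$ with $x\in U_j$ for all $j\in A$ but $x\notin U_i$. Then I would translate the existence of such an $x$ into the language of the neural code: for a point $x$ the code word $\{x\}'=\{i\mid x\in U_i\}\in\ca N_{\ca U}$ satisfies ``$x\in U_j$ for all $j\in A$'' $\iff$ ``$A\subseteq\{x\}'$'', and ``$x\notin U_i$'' $\iff$ ``$i\notin\{x\}'$''. Hence ``there is a point $x$ as above'' is literally ``there is $\tilde A\in\ca N_{\ca U}$ with $A\subseteq\tilde A$ and $i\notin\tilde A$'' — take $\tilde A=\{x\}'$ in one direction, and any $x$ witnessing a given $\tilde A$ in the other. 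Combining the two translations yields the stated rule.

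There is essentially no hard step here; the only point to watch is not to conflate the element $A'\in 2^X$ (which is itself generally not a code word) with a code word — the role of the code word is played by $\{x\}'$ for a suitable witness point, and it is precisely the passage from the failed inclusion $\bigcap_{j\in A}U_j\not\subseteq U_i$ to such an $x$ where the points of the covered space enter. I would also record the boundary cases to confirm the formula is correctly stated: for $A=[m]$ there is no $i\notin A$, so both sides hold, consistent with $[m]$ being always an extent because $\bigcap_{j}U_j\subseteq U_i$ for every $i$; and for $A=\varnothing$ the rule reduces to ``every $U_i$ is a proper subset of $X$'', which indeed is exactly the condition $\varnothing''=\varnothing$.
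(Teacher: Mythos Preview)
Your argument is correct and is exactly the straightforward unwinding the paper has in mind; the paper gives no proof beyond the line ``The proof is straightforward.'' Your translation $A''=A \Leftrightarrow \forall i\notin A\colon \bigcap_{j\in A}U_j\not\subseteq U_i \Leftrightarrow \forall i\notin A\,\exists x\colon \{x\}'\supseteq A,\ i\notin\{x\}'$ is precisely the intended computation, and the boundary cases you check are handled correctly.
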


The proof is straightforward.

\begin{rem}
In certain sense, the converse of Proposition~\ref{propCodeDeterminesConcepts} does not hold. The neural code cannot be extracted from the concept lattice, if the nodes of the lattice are labelled by their extents only. Fig.~\ref{figNerveCodesConcepts} demonstrates two coverings which have identical concept lattices but different neural codes. It is instructive to analyze the second picture. In this case, $U_3\subset U_1\cup U_2$ (in the terminology of FCA this means that the implication $U_3\to U_1\cup U_2$ holds in the context $\Ca_{\ca{U}}$). In other words, there is no point on the plane, where 3-rd neuron fires, while 1-st and 2-nd neurons are inactive. This is why we don't have $\{3\}$ in the neural code. However, $\{2\}$ appears in the concept lattice of this example. Indeed, there exists a couple of points (shown on Fig.~\ref{figNerveCodesConcepts}), which both lie in $U_3$, however this $2$-element set is neither a subset of $U_1$ nor $U_2$.
\end{rem}

\begin{figure}[h]
\begin{center}
\includegraphics[scale=0.3]{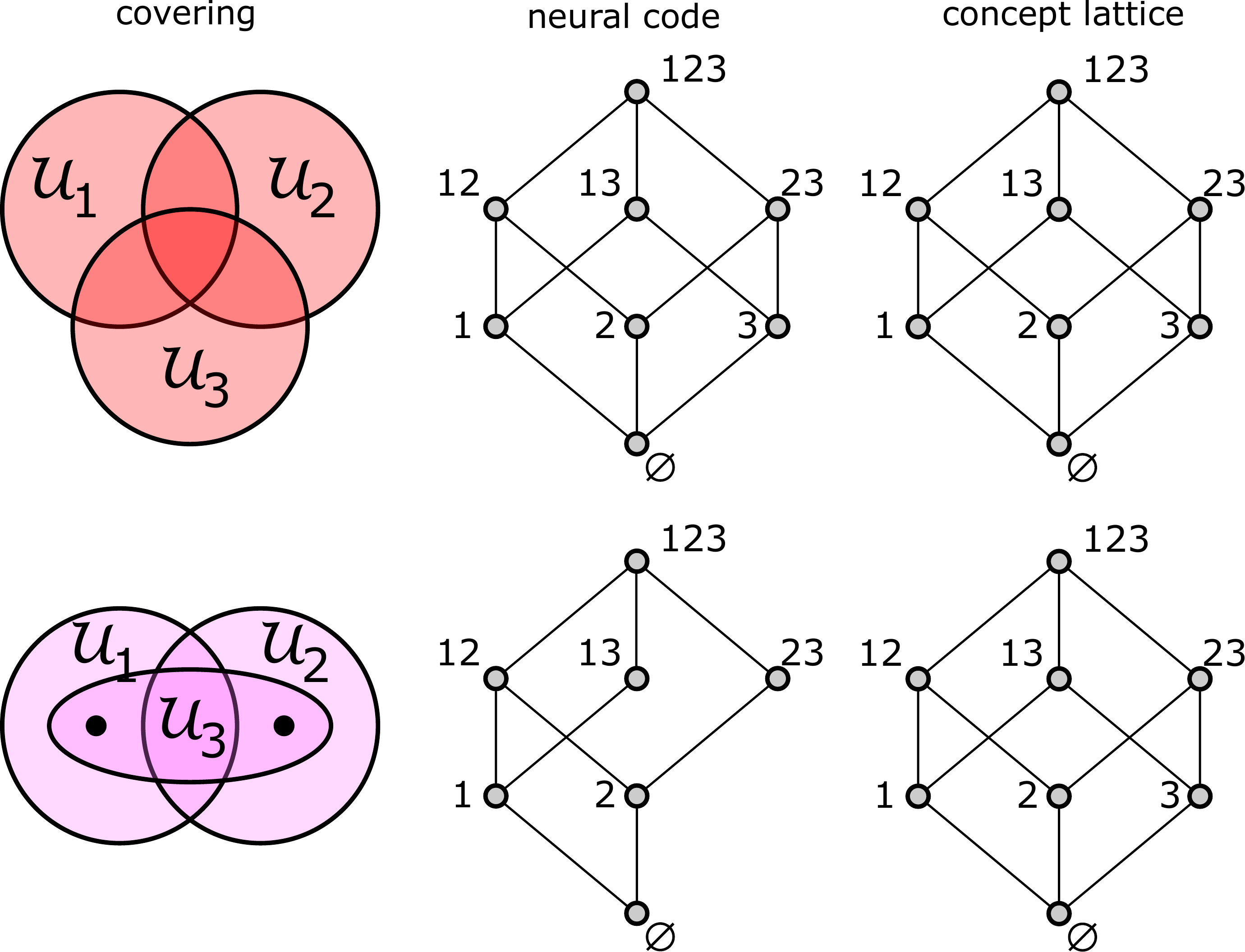}
\end{center}
\caption{Two coverings, their neural codes, and concept lattices}\label{figNerveCodesConcepts}
\end{figure}

\begin{rem}
The paper~\cite{CurItsNeuralRing} studies neural codes from the viewpoint of commutative algebra and algebraic geometry. Note that any neural code $\ca{N}_{\ca{U}}\subseteq 2^{[m]}$ can be considered as a subset of the affine space $\Fo_2^m$ over the finite field with two elements. Hence $\ca{N}_{\ca{U}}$ is an algebraic variety and its coordinate ring $\Rn=\Fo_2[\ca{N}_{\ca{U}}]=\Fo_2[v_1,\ldots,v_m]/\In$ is defined. The ring $\Rn$ is called \emph{the neural ring} of $\ca{N}_{\ca{U}}$, and $\In$ is the \emph{neural ideal}. Certain boolean statements about the covering, such as whether $\bigcap_{i\in I}U_i$ is a subset of $\bigcup_{j\in J}U_j$ for the given index sets $I,J\subseteq[m]$, can be answered in terms of whether a certain polynomial $m_{I,J}$ belongs to the neural ideal $\In$ or not. Hypothetically, this allows to apply Gr\"{o}bner bases for data mining in neural codes.

We make a remark that whether an inclusion $\bigcap_{i\in I}U_i\subseteq U_j$ holds for a cover $\ca{U}$, can be read from the concept lattice $\mathfrak{B}(\Ca_{\ca{U}})$, even if the intents are not specified in the labelling of nodes. In the terminology of FCA, the condition $\bigcap_{i\in I}U_i\subseteq U_j$ is formulated as
\[
\mbox{The implication }\bigcap\nolimits_{i\in I}U_i \to U_j \mbox{ holds in the context }\Ca_{\ca{U}}.
\]
It is not difficult to observe that
\begin{equation}\label{eqInclusion}
\bigcap\nolimits_{i\in I}U_i\subseteq U_j\mbox{ if and only if } j\in I''
\end{equation}
(see~\cite[Sec.3.1]{GaOb}). Given $I\subset[m]$, we can find the closure $I''$, by selecting all elements of the lattice $\mathfrak{B}(\Ca_{\ca{U}})$ with extents containing $I$; then $I''$ is their meet in the lattice:
\[
I''\mbox{ is the extent of }\bigwedge_{(J,J')\in \mathfrak{B}(\Ca_{\ca{U}}), J\supseteq I}J.
\]
We suggest that finding relations between neural rings' approach and data retrieval algorithms developed in FCA may be a promising direction of research.
\end{rem}

\section{Topological formal contexts}\label{secTopologicalFCA}

\subsection{Contexts with structure}

Sometimes there is a necessity to consider formal contexts in which both the set of objects and the set of attributes have some internal structure: a topology, a partial order, an action of a fixed group, or something else.

\begin{rem}
In Example~\ref{exCoversLattices}, the space of attributes $X$ is a topological space, which makes the intents of all concepts (i.e. intersections $U_{i_1}\cap\cdots\cap U_{i_k}$ topological spaces as well).

In the neurobiological application given by Construction~\ref{conMouse}, we have the formal context $(M,X,\I)$, where $M=[m]$ is the (finite) set of neurons and $X$ is the physical space. We see that $X$ carries the natural topological structure, which should be taken into account if we want to use homotopy machinery for the analysis of neural codes. In Construction~\ref{conMouse}, the set $M$ did not have any structure. This is unnatural from biological viewpoint: physical neurons are located in the brain, and they are organized in some structures by themselves. Speaking theoretically, we can take the whole brain, or its parts, as the set $M$. In this case there is a topology on $M$ as well. This may either be a ``euclidean'' topology (in which two neurons are close if they are located physically close to each other) or some sort of ``connectivity'' topology (in which two neurons are close, if the signal passes from one to another in a short time) or any other reasonable topology. In any case, it is natural to assume that both the set of objects $M$ and the set of attributes $X$ have topological structure.

This setting may seem too theoretical at first glance. Looking at the brain as topological continuum, we can hardly imagine an algorithm to deal with it: we may only study discrete objects. However, it should be noted, that working with finite topological spaces instead of continuous ones is not a big restriction from the homotopy-theoretical point of view. The basis of homotopy theory of finite topological spaces was laid in the works of McCord~\cite{McCord} and Stong~\cite{Stong}. McCord~\cite{McCord} had shown that weak homotopy types of finite CW-complexes are efficiently stored in finite (although non-Hausdorff) topological spaces. For example, there is a 4-point topological space, which is weakly homotopy equivalent to the circle. Stong~\cite{Stong} had shown that a reasonable notion of homotopy equivalence exists for a class of finite topological spaces (so called $T_0$-spaces, or Alexandrov topologies~\cite{Alex}).

There is an increased interest to the homotopy theory of finite topologies nowadays: we refer to the books of Barmak~\cite{BarmakBook} and May~\cite{May} and references therein. In particular, we mention that Quillen--McCord theorem takes a natural form, being restated in terms of finite topologies, as was done by McCord~\cite{McCord}. To summarize: the setting in which both objects and attributes have topology seems meaningful from the algorithmic viewpoint as well.
\end{rem}

\begin{defin}\label{defTopolFormalContext}
A \emph{topological formal context} is a triple $\Ca=(X,Y,\I)$, where $X$ and $Y$ are topological spaces, and $\I\subseteq X\times Y$ is a topological subspace.
\end{defin}

For certain tasks, the subset $\I$ is assumed to have some additional properties which assure that it is tame enough (for example as desribed in subsection~\ref{subsecProofs}). Definition~\ref{defTopolFormalContext} does not differ too much from Definition~\ref{definFormalConcept}: we just assume that all sets have topology. The notion of a formal concept also remains the same, however, in this case extents and intents of formal concepts are topological spaces.

Similarly, we can suppose that objects and attributes have partial order.

\begin{defin}
An \emph{ordered formal context} is a triple $\Ca=(X,Y,\I)$, where $X$ and $Y$ are posets, and $\I\subseteq X\times Y$ is a subposet with the induced order.
\end{defin}

The concepts are defined as in Definition~\ref{definFormalConcept}. Extents and intents of all concepts inherit partial order.

\begin{rem}
It is possible to give a general pointless definition of an $\ST{M}$-enriched formal context $\Ca=(X,Y,\I)$, where $\ST{M}$ is a well-powered complete symmetric monoidal category with an initial object. In this general setting, $X$ and $Y$ are objects of $\ST{M}$ and $\I$ is a subobject in the product $X\otimes Y$ (probably, taken from a certain class of subobjects). The concepts are given by Definition~\ref{definFormalConcept} where we put $A'$ to be the class of the monomorphism
\[
f_A=\colim\limits_{B\in\SubObj(Y)\colon A\times B\subseteq \I}B\to Y,
\]
(we should require that $f_A$ is a monomorphism so that its class determines a subobject). The subobject $B'\in\SubObj(X)$ is defined similarly for $B\in\SubObj(Y)$.

On this way, we can get a variety of formal concept theories, such as topological ($\ST{M}=\ST{Top}$, the category of topological spaces), ordered ($\ST{M}=\ST{Poset}$, the category of posets), quantum ($\ST{M}=\Hilb$, the category of Hilbert spaces), etc.
\end{rem}

\subsection{Appendix: Classical proofs using topological contexts}\label{subsecProofs}

In this final subsection we review the classical proofs of Alexandrov Nerve theorem and Quillen--McCord theorem in terms of topological contexts. First we recall the principal result of Smale~\cite{Smale}. A topological space $X$ is called locally contractible if, for any point $x\in X$ and any neighborhood $U\ni x$ there exists a smaller neighborhood $V\subset U$ of $x$ such that the inclusion $V\hookrightarrow U$ is homotopic to a constant map.

\begin{thm}[Smale~\cite{Smale}]\label{thmSmale}
Let $f\colon A\to B$ be a proper surjective map of locally compact separable metric spaces and assume that $A$ is locally contractible. If the fiber $f^{-1}(b)\subseteq A$ is contractible and locally contractible for all $b\in B$, then $f$ is a homotopy equivalence.
\end{thm}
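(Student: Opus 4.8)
The plan is to prove first that $f$ is a \emph{weak} homotopy equivalence, by a Vietoris-type obstruction argument in the spirit of the Vietoris--Begle theorem, and then to upgrade this to an honest homotopy equivalence. Two elementary consequences of properness will be used repeatedly. First, each fibre $f^{-1}(b)$ is compact, hence is a compact, contractible, locally contractible metric space. Second, and more importantly: if $V\subseteq A$ is open with $f^{-1}(b)\subseteq V$, then $f^{-1}(U)\subseteq V$ for every sufficiently small open $U\ni b$; indeed, choosing a compact neighbourhood $\bar U$ of $b$, the set $f^{-1}(\bar U)$ is compact, so a hypothetical sequence of points $a_k\in f^{-1}(U_k)\setminus V$ with $U_k\downarrow\{b\}$ would have a subsequence accumulating at a point of $f^{-1}(b)\subseteq V$, a contradiction.

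The technical heart is the following \emph{shrinking property}, to be established for every $n\geqslant 0$: for each $b\in B$ and each open $V\supseteq f^{-1}(b)$ there is an open $U\ni b$ with $f^{-1}(U)\subseteq V$ such that every map $S^{i}\to f^{-1}(U)$ with $i\leqslant n$ extends to a map $D^{i+1}\to V$ (``$f^{-1}(U)$ is homotopically trivial in $V$ through dimension $n$''). This is where contractibility of the fibre is spent: $f^{-1}(b)$ is null-homotopic within itself, hence within $V$; local contractibility of $f^{-1}(b)$ and of $A$ lets one realize such contractions on slightly larger sets; and the squeezing from the previous paragraph absorbs a small enough $f^{-1}(U)$ into them.

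With the shrinking property available I would run the standard cell-by-cell lifting induction. Given a CW pair $(P,P_0)$, a map $\phi\colon P\to B$, and a partial lift $\psi_0\colon P_0\to A$ with $f\psi_0=\phi|_{P_0}$, subdivide $P$ finely relative to $\phi$ so that each closed cell is carried by $\phi$ into some $U$ as above, and then extend $\psi_0$ over the skeleta of $P$ one cell at a time, the extension over a cell being possible because its boundary data lies in a set $f^{-1}(U)$ that is homotopically trivial, in the ambient $V$, up to the dimension in play. Applying this with $(P,P_0)=(D^n,S^{n-1})$ gives surjectivity of $f_*\colon\pi_n(A)\to\pi_n(B)$; applying the homotopy-refined version with $(P,P_0)=\bigl(D^n\times I,\ (D^n\times\partial I)\cup(S^{n-1}\times I)\bigr)$ gives injectivity; the component level $n=0$ is handled the same way. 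Hence $f$ is a weak homotopy equivalence. Since $A$ and $B$ are locally contractible, locally compact, separable metric spaces, ANR theory (Borsuk, Milnor) gives them the homotopy type of CW complexes, so Whitehead's theorem promotes $f$ to a homotopy equivalence.

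The step I expect to be the main obstacle is the shrinking property together with its bookkeeping: one needs, for a fixed $V$, a single $U$ good enough for a lifting problem of bounded dimension, and one must track a nested chain $V\supseteq V'\supseteq\cdots$ of ambient sets in which the successive extensions over cells are carried out, so that every obstruction genuinely vanishes rather than merely being pushed into a smaller set. A secondary delicate point is the final upgrade: pinning down that exactly the stated hypotheses force CW homotopy type really uses the theory of absolute neighbourhood retracts --- this is where ``separable metric'' and ``locally compact'' earn their keep --- and one should check whether finite-dimensionality is implicitly needed, or can be avoided by constructing the homotopy inverse directly from the lifting property via a mapping-cylinder argument.
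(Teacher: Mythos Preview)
The paper does not give a proof of this theorem. It is quoted as Smale's result from~\cite{Smale} and immediately used as a black box to obtain Corollary~\ref{corContractFibers} for cellular maps of finite CW-complexes; that corollary is then applied twice, to the two projections of a suitable subspace $\I\subseteq X\times Y$, to prove the Nerve theorem and Quillen--McCord theorem. There is therefore no proof in the paper to compare your attempt against.

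Your outline is, in broad strokes, the classical route to results of this type and is close in spirit to Smale's own argument: establish an approximate lifting property via the shrinking lemma, deduce that $f$ is a weak homotopy equivalence by cell-by-cell extension, and then upgrade via Whitehead. One point you should revisit in the final step: you assert that $B$ is locally contractible, but the stated hypotheses only give this for $A$; before you can invoke ANR theory or CW homotopy type for $B$, you need to derive the requisite local connectivity of $B$ from the map itself. The obstacle you already flagged---controlling the nested chain of neighbourhoods so that each successive extension genuinely lands where the next step needs it---is indeed where the real work lies, and your proposal is still only a plan at that point rather than a proof.
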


If $X$ is a finite CW-complex, then $X$ is an ANR, which implies local contractibility of $X$ according to~\cite{Kod}. Therefore we have the following.

\begin{cor}\label{corContractFibers}
Let $f\colon A\to B$ be a cellular map of finite CW-complexes and, for each $b\in B$, the fiber $f^{-1}(b)$ is a contractible CW-complex. Then $f$ is a homotopy equivalence.
\end{cor}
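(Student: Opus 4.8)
The plan is to deduce the corollary directly from Smale's theorem (Theorem~\ref{thmSmale}), so that the whole task reduces to checking that a cellular map $f\colon A\to B$ of finite CW-complexes with all fibers contractible CW-complexes satisfies the hypotheses of that theorem. First I would record the point-set facts: a finite CW-complex is a compact metrizable space, hence in particular a locally compact separable metric space, so both $A$ and $B$ qualify. Since $A$ is compact and $B$ is Hausdorff, the map $f$ is closed, and the preimage of any compact $K\subseteq B$ is a closed subset of the compact space $A$, hence compact; thus $f$ is proper. Surjectivity is forced by the assumption on fibers: if some $b\in B$ were not in the image of $f$, then $f^{-1}(b)=\varnothing$, which is not contractible, a contradiction.

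Next I would address local contractibility. As recalled just before the statement, a finite CW-complex is an ANR, and by \cite{Kod} every ANR is locally contractible; applying this to $A$ gives the local contractibility of the total space demanded by Theorem~\ref{thmSmale}. For the fibers, the hypothesis already provides that each $f^{-1}(b)$ is a contractible CW-complex; since CW-complexes are locally contractible (once more one may invoke the ANR property in the finite case, or argue directly), every fiber is both contractible and locally contractible, exactly as Smale's theorem requires. With all hypotheses in place, Theorem~\ref{thmSmale} asserts that $f$ is a homotopy equivalence, which is the desired conclusion.

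The only point that calls for a little care is the passage between the combinatorial datum ``cellular map of finite CW-complexes'' and the metric-topological hypotheses of Smale's theorem — in particular, ensuring that the fibers, which are merely assumed to be CW-complexes rather than subcomplexes of $A$, still carry the required local contractibility, and that properness genuinely follows from compactness. I expect this bookkeeping, rather than any substantive topological difficulty, to be the main (and quite mild) obstacle; everything else is an unwinding of definitions feeding into the black box provided by \cite{Smale}.
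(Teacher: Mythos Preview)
Your proposal is correct and follows exactly the paper's approach: the paper derives the corollary immediately from Smale's theorem after noting that finite CW-complexes are ANRs and hence locally contractible (citing \cite{Kod}). You have simply spelled out in detail the routine verifications (properness, surjectivity, metrizability, local contractibility of domain and fibers) that the paper leaves implicit in the phrase ``Therefore we have the following.''
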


Nerve theorem and Quillen--McCord theorem are proved using the same idea. The argument is classical (see \cite[Cor.4G.3]{Hat}, \cite[Theorem A]{Quil}), although we restate it in terms of topological formal contexts. To prove the homotopy equivalence between two spaces $X$ and $Y$, we find a topological formal context $(X,Y,\I)$, $\I\subseteq X\times Y$, such that the projections $\pr_X\colon \I\to X$ and $\pr_Y\colon \I\to Y$ are homotopy equivalences. In view of Corollary~\ref{corContractFibers}, if all fibers
\[
\{x\}'=\pr_X^{-1}(x)\quad\mbox{and}\quad \{y\}'=\pr_Y^{-1}(y)
\]
are contractible CW-complexes for all $x\in X$ and $y\in Y$, then $X\simeq \I\simeq Y$.

\begin{proof}[Proof of the Nerve theorem (Theorem~\ref{thmNerveThm})]
Let $\{U_i\}_{i\in[m]}$ be the covering of the space $X$ and $K_{\ca{U}}$ be its nerve. We assume that $X$ is a CW complex and all intersections $\bigcap_{i\in I}U_i$, $I\in K_{\ca{U}}$ are its CW-subcomplexes (although the proof is similar, if $X$ is arbitrary and the subsets $U_i$ are open). If $x$ is a point in $K_{\ca{U}}$, let $I(x)\subseteq [m]$ be the unique simplex of the nerve, which contains $x$ in its interior.

Consider the topological formal context $(K_{\ca{U}},X,\I_{\Nerve})$, where
\[
\I_{\Nerve}=\left\{(x,y)\in K_{\ca{U}}\times X\mid y\in \bigcap\nolimits_{i\in I(x)}U_i\right\}.
\]
Then, for any $x\in K_{\ca{U}}$, the space $\{x\}'=\pr_{K_{\ca{U}}}^{-1}(x)=\bigcap_{i\in I(x)}U_i$ is contractible by assumption. For any $y\in X$, the space $\{y\}'=\pr_Y^{-1}(y)$ coincides with the simplex on the vertex set $\{i\in[m]\mid U_i\supset x\}$. Hence $\{y\}'$ is contractible as well. Therefore, applying the Corollary~\ref{corContractFibers} twice we get $X\simeq I_{\Nerve}\simeq K_{\ca{U}}$.
\end{proof}

\begin{rem}
Without the assumption that the cover is contractible, there still holds $X\simeq I_{\Nerve}$, by the same argument. For general covers, the space $I_{\Nerve}$ is the homotopy colimit of the diagram $D\colon \cat(S_K^{\op})\to\ST{Top}$, $D(\{i_1,\ldots,i_s\})= U_{i_1}\cap\cdots\cap U_{i_s}$.
\end{rem}

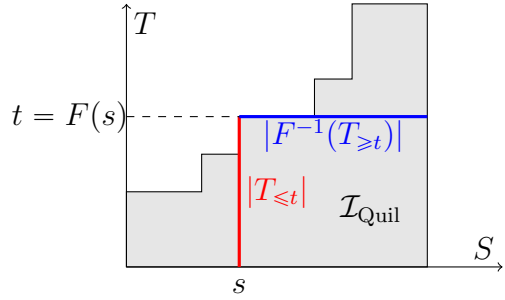
\begin{figure}[h]
\begin{center}
\begin{tikzpicture}[scale=0.5]
        \filldraw[fill=black!10] (0,0)--(0,2)--(2,2)--(2,3)--(3,3)--(3,4)--(5,4)--(5,5)--(6,5)--(6,7)--(8,7)--(8,0)--cycle;
        \draw[->]  (0,0)--(10,0);
        \draw[->]  (0,0)--(0,7);
        \draw (9.5,0.5) node{$S$};
        \draw (0.5,6.5) node{$T$};
        \draw[very thick, red] (3,0)--(3,4);
        \draw[very thick, blue] (3,4)--(8,4);
        \draw[dashed] (0,4)--(3,4);
        \draw (3,-0.5) node{$s$};
        \draw (-1.5,4) node{$t=F(s)$};
        \draw[red] (4,2) node{$|T_{\leqslant t}|$};
        \draw[blue] (5.5,3.5) node{$|F^{-1}(T_{\geqslant t})|$};
        \draw (6.5,1.5) node{$\I_{\Quil}$};
%
%
%
%
%
%
\end{tikzpicture}
\end{center}
\caption{The proof of Quillen--McCord theorem} \label{figQuillenThm}
\end{figure}

\begin{proof}[Proof of Quillen--McCord theorem (Theorem~\ref{thmQuilMcCord})]
We have a morphism of posets $F\colon S\to T$ such that, for any $t\in T$, the space $|F^{-1}(T_{\geqslant t})|$ is contractible. We will prove that $|S|\simeq |T|$ (this is slightly weaker than proving $f$ to be a homotopy equivalence). Consider the topological formal context $(|S|,|T|,\I_{\Quil})$ where
\[
\I_{\Quil}=\bigsqcup_{\substack{I=(s_1<\cdots<s_k)\in K(S)\\J=(t_1<\cdots<t_l)\in K(T)\\t_l\leqslant f(s_1)}}|\sigma|\times|\tau|\subseteq |S|\times |T|.
\]
The context $\I_{\Quil}$ can be understood as the undergraph of the morphism $f$ (Fig.~\ref{figQuillenThm} gives a rough idea of this construction). For a point $x$ of the geometrical realization of a poset $R$, consider the unique simplex $\sigma=(s_0<s_1<\cdots<s_k)$ which contains $x$ in its interior, and define $m(x)\eqd s_0$ and $M(x)\eqd s_k$. Then, for any $x\in |S|$, the space $\{x\}'=\pr_{|S|}^{-1}(x)=|T_{\leqslant F(m(x))}|$ is contractible by Remark~\ref{remMinMaxCone}. For any $y\in |T|$, the space $\{y\}'=\pr_{|T|}^{-1}(y)=|F^{-1}(T_{\geqslant M(y)})|$ is contractible by assumption. Corollary~\ref{corContractFibers} implies homotopy equivalences $|S|\simeq \I_{\Quil}\simeq |T|$.
\end{proof}

\section*{Acknowledgements}
I am grateful to Konstantin Anokhin for bringing the subject of topological analysis of neural codes to my attention, and to Sergei Ivanov for telling me about the papers~\cite{Dabag,DabagNoCov}. I am also grateful to Oleg Kachan for the introduction to the study of strong collapses in computational topology.

\end{document}